\documentclass[final]{amsart}
\usepackage{amssymb}
\usepackage{hhline}
\usepackage{graphicx}
\usepackage{stmaryrd}
\usepackage{float}
\usepackage{cite}

\usepackage[utf8]{inputenc}
\usepackage{listings}
\usepackage{xcolor}

\newtheorem{theorem}{Theorem}[section]

\newtheorem{proposition}[theorem]{Proposition}
\newtheorem{corollary}[theorem]{Corollary}

\newtheorem{problem}[theorem]{Problem}
\theoremstyle{definition}

\newtheorem{remark}[theorem]{Remark}

\newcommand{\Aut}{\mathrm{Aut\mkern 2mu}}
\newcommand{\Halo}{\mathrm{Halo\mkern 2mu}}

\title{On the number of $g$-dimonoids of small order}

\author{Volodymyr M. Gavrylkiv}
\address[V.~Gavrylkiv]{Vasyl Stefanyk Carpathian National University, Ivano-Frankivsk, Ukraine} \email{vgavrylkiv@gmail.com}
\subjclass{18B40, 37L05, 22A15, 20D45, 20M15, 20B25}
\keywords{semigroup, $g$-dimonoid, abelian $g$-dimonoid, commutative $g$-dimonoid, rectangular $g$-dimonoid, halo, automorphism group}

\begin{document}

\begin{abstract}
We study $g$-dimonoids, that is, algebraic structures endowed with two associative binary operations satisfying a specified system of axioms. The paper investigates duality and isomor\-phisms of $g$-dimonoids and provides a complete characterization of all rectangular commuta\-tive $g$-dimonoids. Several new examples are constructed, including commutative iso-dual $g$-dimonoids that are not abelian, and noncommutative nonabelian rectangular $g$-dimonoids that are not dimonoids. A complete classification of all two-element $g$-dimonoids is obtained. Finally, we present the results of computer computations determining the numbers of all pairwise non\-isomorphic $g$-dimonoids of orders up to~$5$, as well as all pairwise nonisomorphic commutative, abelian, and rectangular $g$-dimonoids of orders up to~$6$, obtained using \texttt{GAP}, \texttt{Python}, and \texttt{C++}.

\end{abstract}

\maketitle

\section*{Introduction}

The concept of a dialgebra was introduced by Jean-Louis Loday~\cite{Lod} in his search for a class of (linear) algebras that generate Leibniz algebras in a manner analogous to how associative algebras give rise to Lie algebras. Recall that a Leibniz algebra is a linear algebra over a field whose bracket operation $[, ]$ satisfies the Leibniz derivation identity
$$[[x, y], z] = [[x, z], y] + [x, [y, z]],$$
without necessarily being anticommutative.

Loday’s idea was to “separate” the left and right multiplications, treating them as distinct associative operations, denoted $\vdash$ and $\dashv$.
He observed that if these operations satisfy the following three axioms:
\begin{align*}
(x \dashv y) \dashv z &= x \dashv (y \vdash z), \hspace{10mm}(D_1) \\
(x \vdash y) \dashv z &= x \vdash (y \dashv z), \hspace{10mm}(D_2)\\\
(x \dashv y) \vdash z &= x \vdash (y \vdash z), \hspace{10mm}(D_3)
\end{align*}
then the bracket defined by $[x, y] = x \dashv y - y \vdash x$ satisfies the Leibniz identity.

Accordingly, Loday defined a {\em dimonoid}~\cite{Lod} as an algebraic structure $(D,\dashv,\vdash)$ consisting of a set $D$ equipped with two associative binary operations $\dashv$ and $\vdash$ satisfying axioms $(D_1)$, $(D_2)$, and $(D_3)$.  Since dialgebras are the linear analogues of dimonoids, many results on dimonoids have direct applications in the theory of dialgebras~\cite{B,F,Lod,M,ZCY}. 
T.~Pirashvili~\cite{P} introduced the concept of a duplex, a generalization of dimonoids, and constructed the free duplex.
The properties of free dimonoids were employed in~\cite{Lod} to characterize free dialgebras and to study their cohomologies. In~\cite{Liu}, the notion of a dimonoid was used to define and investigate one-sided dirings. Furthermore, dimonoids are closely related to restrictive bisemigroups~\cite{Sh} and doppelsemigroups~\cite{GR1, GDS2, Zh2017AU}.

In~\cite{BG9, G7, G9, GR1, GDS2, GUDS, Gdim1, Gdim2, GSDS, GLDS, GS_sie, Gdim3, GDSso}, several classes of dimonoids, doppelsemigroups, and extensions of doppelsemigroups were constructed and studied. Among numerous results, these works provide isomor\-phism criteria for the corresponding classes, structural classifications of objects of small order, descriptions of their automorphism groups, and enumerations of pairwise nonisomor\-phic structures of small order.
 
In~\cite{MDS}, the notion of a {\em generalized dimonoid} (or {\em $g$-dimonoid}) was introduced as an algebraic structure $(D, \dashv, \vdash)$ consisting of a set $D$ endowed with two associative binary operations $\dashv$ and $\vdash$ that satisfy the axioms $(D_1)$ and $(D_3)$. The construction of the free $g$-dimonoid was also described therein. Every semigroup $(D, \dashv)$ can naturally be regarded as a $g$-dimonoid $(D, \dashv, \dashv)$, referred to as the {\em trivial $g$-dimonoid}. In this sense, $g$-dimonoids constitute a genuine generalization of semigroups. 

An {\em associative $0$-dialgebra}~\cite{Po}, that is, a linear space over a field equipped with two associative binary operations $\dashv$ and $\vdash$ satisfying the same axioms $(D_1)$ and $(D_3)$, can be viewed as a linear analogue of a $g$-dimonoid. Given the central role of dimonoids and $g$-dimonoids in the study of Leibniz algebras, dialgebras, and associative 0-dialgebras, their investigation by semigroup-theoretic methods constitutes a natural and promising line of research.

In~\cite{ZhYulia1}, a free $n$-nilpotent $g$-dimonoid was constructed, the least $n$-nilpotent congruence on a free $g$-dimonoid was examined, and a characterization of a free $g$-dimonoid was obtained. The construction of the free commutative $g$-dimonoid together with the description of the least commutative congruence on a free $g$-dimonoid was provided in~\cite{ZhYulia2}. Furthermore, Yu.~Zhuchok~\cite{ZhYurii_gdm2016} determined all isomorphisms between the endomorphism semigroups of free commutative $g$-dimonoids and proved that every automorphism of such an endomorphism semigroup is quasi-inner. In~\cite{MY}, Cayley-type theorems for $g$-dimonoids were proved using left and right actions of sets and the concept of a dialgebra. More recently, A.~Zhuchok~\cite{ZhAEJM2022} described the least dimonoid congruence and the least semigroup congruence on the free (commutative, $n$-nilpotent) $g$-dimonoid. In~\cite{Ggdim2}, a  construction of $g$-dimonoids based on inflations of semigroups was introduced. In addition, several classes of $2$-dinilpotent commutative $g$-dimonoids were established, isomorphism criteria for these classes were obtained, and the dimonoids among them were characterized.

The present paper investigates duality and isomorphisms of $g$-dimonoids. We introduce several new classes of $g$-dimonoids and determine their automorphism groups and halos. Based on these constructions, a complete classification, up to isomorphism, of all two-element $g$-dimonoids is obtained. Moreover, all rectangular commutative $g$-dimonoids are fully characterized. Furthermore, we construct examples of commutative iso-dual $g$-dimo\-noids that are nonabelian, as well as noncommutative nonabelian rectangular $g$-dimonoids that are not dimonoids. Finally, computer-assisted calculations yield the numbers of all pairwise nonisomorphic $g$-dimonoids of orders up to~$5$, and all pairwise nonisomorphic commutative, abelian, and rectangular $g$-dimonoids of orders up to~$6$.

\section{Preliminaries on semigroups}

An element $e$ of a semigroup $(S,*)$ is called  {\em a left identity} (resp.  {\em a right identity}) in $S$ if $e*a=a$ (resp. $a*e=a$) for any $a\in S$. An element 1 is an \emph{identity} if it is both a left and a right identity. An element $e$ of a semigroup $(S,*)$ is called  {\em a middle identity} in $S$ if $\ell*e*r=\ell*r$  for all $\ell,r\in S$. It is clear that each left (right) identity is a middle identity.

An element $e$ of a semigroup $(S,*)$ is called an {\em idempotent} if $e*e=e$.  The semigroup is a {\em band}, if all its elements are idempotents. Commutative bands are called {\em semilattices}.
By $L_n$ we denote the {\em linear semilattice} $\{0, 1,\ldots, n\!-\!1\}$ of order $n$, endowed with the operation of minimum.

\smallskip

An element $z$ of a semigroup $S$ is called  {\em a left zero} (resp.  {\em a right zero}) in $S$ if $z*a=z$ (resp. $a*z=z$) for any $a\in S$. An element 0 is called a \emph{zero} if it is both a left  and a right zero.

Let $(S,*)$ be a semigroup and $0\notin S$. The binary operation $*$ defined on  $S$  can be extended to $S\cup\{0\}$ putting $0*s=s*0=0$ for all $s\in S\cup \{0\}$.
The notation $(S,*)^{+0}$  denotes a semigroup $(S\cup\{0\},*)$ obtained from $(S,*)$ by adjoining the extra zero $0$ (regardless of whether $(S,*)$ has a zero). 

\smallskip

A semigroup $(S,*)$ is called a {\em null semigroup} if there exists an element $0\in S$ such that $x*y=0$ for all $x,y\in S$. In this case  $0$ is a zero of $S$.  By $O_{S^0}$ we denote a null semigroup with zero $0$ on a set $S$.  The null semigroups $O_{S^0}$ and $O_{T^z}$ are isomorphic if and only if $|S|=|T|$. If $S$ is a  set of cardinality $|S|=n$, we use the notation $O_n$ for a representative of the class of semigroups isomorphic to $O_{S^0}$.

\smallskip

If $(S,*)$ is a semigroup, then the semigroup $(S,{*}^d)$ with operation $x{*}^d y=y* x$ is called {\em dual} to $(S,*)$, denoted $(S,*)^d$. It follows that $(S,*)^d = (S,*)$ if and only if $(S,*)$ is a commutative semigroup, and $\Aut(S,*)^d=\Aut(S,*)$.

\smallskip

A semigroup $(S,*)$ is said to be a {\em left} (resp. {\em right}) {\em zero semigroup} if $a*b=a$ (resp. $a*b=b$) for any $a,b\in S$. By  $LO_S$ and $RO_S$ we denote  a left zero  and a right zero semigroup on a set $S$, respectively.   If $S$ is a set of cardinality $|S|=n$,  we use the notations $LO_n$ and $RO_n$ for representatives of the classes of semigroups isomorphic to $LO_S$ and $RO_S$, respectively.

\smallskip

A semigroup $(S, *)$ is called \emph{rectangular}~\cite{ZhCA2017a, GS_sie} if every element of $S$ is a middle identity, that is,
$x*y*z = x*z$ for all  $x, y, z \in S$. In other words, the product in $S$ depends only on the first and the last factors. 
A nontrivial null semigroup is an  example of a commutative rectangular semigroup that has no identity element. The semigroups $LO_S$ and $RO_S$ are dual rectangular bands.
In~\cite{Nagy}, it was proved that a semigroup $(S,*)$ is rectangular if and only if the factor semigroup $S/\theta$ is a right zero semigroup, 
where $\theta = \{(a, b)\in S\times S : x*a = x*b \text{ for all } x\in S\}$. The authors also described a method for constructing all rectangular semigroups. In~\cite{GS_sie}, the study focused on rectangular semigroups, including rectangular ideal extensions of left (right) zero semigroups by null quotients, and provided new combinatorial results for the numbers of pairwise nonisomorphic instances.

\smallskip

Let $S$ be a nonempty set, $A \subseteq S$ a nonempty subset, and $a \in A$.
Define an associative binary operation $*$ on $S$ as follows:

$$x* y=\begin{cases}
x\quad \text{if } x\in A \\
a\quad \text{if } x \notin A.
\end{cases}$$

We denote the semigroup $(S,*)$  by $LO_{A_a\leftarrow S}$. It follows that  all elements $z\in A$ are left zeros of $LO_{A_a\leftarrow S}$. If $A=\{a\}$, then $LO_{A_a\leftarrow S}$ coincides with a null semigroup $O_{S^a}$ with zero $a$. If $A=S$, then $LO_{A_a\leftarrow S}$ coincides with a left zero semigroup $LO_S$. The semigroups $LO_{A_a\leftarrow S}$ and $LO_{B_b\leftarrow T}$  are isomorphic if and only if $|S|=|T|$ and $|A|=|B|$. If $S$ is a finite set of cardinality $|S|=n$ and $|A|=m$,  we use the notation $LO_{m\leftarrow n}$ for a representative of the class of semigroups isomorphic to  $LO_{A_a\leftarrow S}$.

By $RO_{A_a\leftarrow S}$ we denote a dual semigroup of $LO_{A_a\leftarrow S}$ and use the notation $RO_{m\leftarrow n}$ accordingly. Both $LO_{A_a\leftarrow S}$ and $RO_{A_a\leftarrow S}$ are rectangular semigroups. Each of them is a rectangular band precisely when $A=S$.

\bigskip

Following the algebraic tradition, we take for a model of the class of cyclic groups of order $n$ the multiplicative group
$C_n=\{z\in\mathbb C:z^n=1\}$ of $n$-th roots of $1$. For a set $X$,  we denote by $S_X$ the group of all bijections of $X$.

\section{Some definitions and basic properties of $g$-dimonoids}

In this section, we recall several useful results on dimonoids  that will be  used in the subsequent investigations.

An element $e$ of a $g$-dimonoid $(D,\dashv, \vdash)$ is called a {\em bar-unit} if $e \vdash d = d = d \dashv e$ for all $d\in D$. In contrast to monoids a $g$-dimonoid may have many bar-units. The set of all bar-units of a $g$-dimonoid $(D,\dashv, \vdash)$ is called the {\em halo} of $(D,\dashv, \vdash)$, denoted $\Halo(D,\dashv, \vdash)$. A nonempty subset $B\subset D$ is called a {\em $g$-subdimonoid}~\cite{ZhYulia1} of a $g$-dimonoid $(D,\dashv, \vdash)$  if $a \dashv b, a \vdash b \in B$ for all $a, b \in B$. If the halo of $(D,\dashv, \vdash)$ is nonempty, then it is a $g$-subdimonoid of $(D,\dashv, \vdash)$.

An element $0\in D$  is called  a {\em   zero of a $g$-dimonoid} $(D,\dashv, \vdash)$~\cite{ZhYulia1} if $0$ is a zero of $(D,\dashv)$ and a  zero of $(D,\vdash)$. Let $(D,\dashv, \vdash)$ be a $g$-dimonoid and $0\notin D$. The binary operations defined on  $D$  can be extended to $D\cup\{0\}$ putting $0\dashv d=d\dashv 0=0=0\vdash d=d\vdash 0 $ for all $d\in D\cup \{0\}$. The notation $(D,\dashv, \vdash)^{+0}$  denotes a $g$-dimonoid $D\cup\{0\}$ obtained from $D$ by adjoining the extra zero $0$. 
It follows  that $\Halo((D,\dashv, \vdash)^{+0}) = \Halo(D,\dashv, \vdash)$.

\smallskip

The axioms of a $g$-dimonoid imply the following proposition.

\begin{proposition}\label{grl_id} Let $(D,\dashv, \vdash)$ be a $g$-dimonoid. If the semigroup $(D,\dashv)$ contains a left identity or the semigroup $(D,\vdash)$ contains a right identity, then the operations of a $g$-dimonoid $(D,\dashv, \vdash)$ coincide.
\end{proposition}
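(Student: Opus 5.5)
The plan is to use only axioms $(D_1)$ and $(D_3)$, which hold in every $g$-dimonoid, and to substitute the given one-sided identity into the appropriate position. The two cases are symmetric.

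Suppose first that $e$ is a left identity of $(D,\dashv)$, so $e\dashv x=x$ for all $x\in D$. Apply $(D_3)$, $(x\dashv y)\vdash z=x\vdash(y\vdash z)$, with $x=e$. The left-hand side becomes $(e\dashv y)\vdash z=y\vdash z$. This gives $y\vdash z=e\vdash(y\vdash z)$, so $e$ acts as a left identity for $\vdash$ on the set $D\vdash D$.

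That alone does not give coincidence of the operations, so I turn to $(D_1)$, $(x\dashv y)\dashv z=x\dashv(y\vdash z)$, again with $x=e$. The left-hand side is $(e\dashv y)\dashv z=y\dashv z$. The right-hand side is $e\dashv(y\vdash z)=y\vdash z$, because $e$ is a left identity for $\dashv$. Hence $y\dashv z=y\vdash z$ for all $y,z\in D$, and the operations coincide. So $(D_1)$ alone settles this case, and the $(D_3)$ computation above is not needed.

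Now suppose $f$ is a right identity of $(D,\vdash)$, so $x\vdash f=x$ for all $x\in D$. Apply $(D_3)$ with $z=f$. The left-hand side is $(x\dashv y)\vdash f=x\dashv y$. The right-hand side is $x\vdash(y\vdash f)=x\vdash y$. Hence $x\dashv y=x\vdash y$ for all $x,y\in D$.

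I do not expect a real obstacle. The only thing to get right is the choice of axiom for each case: $(D_1)$ with the identity in the first slot for a left identity of $\dashv$, and $(D_3)$ with the identity in the last slot for a right identity of $\vdash$. One could also observe that the second case is the first case for the dual structure, since duality swaps the roles of $(D_1)$ and $(D_3)$. However, the paper has not yet introduced duality of $g$-dimonoids, so I would just give both one-line computations.
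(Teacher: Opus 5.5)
Your proof is correct and is exactly the direct verification the paper has in mind when it says the proposition follows from the axioms (it gives no further argument): put the left identity of $(D,\dashv)$ into the first slot of $(D_1)$, and the right identity of $(D,\vdash)$ into the last slot of $(D_3)$. As you note, the preliminary $(D_3)$ computation in the first case is superfluous and can be dropped.
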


\smallskip

Let $(D,\dashv, \vdash)$ be a $g$-dimonoid. Define new operations $\dashv^d$ and  $\vdash^d$ on $D$ by 
$$x \dashv^d y = y \vdash x\ \ \text{  and  }\ \  x \vdash^d y = y \dashv x.$$
It is immediate to check that $(D,\dashv^d, \vdash^d)$ is a new $g$-dimonoid, called the {\em  dual $g$-dimonoid of $(D,\dashv, \vdash)$}, which we denote by $(D,\dashv, \vdash)^d$. It follows that the unary duality operation $d: (D, \dashv, \vdash) \mapsto (D, \dashv, \vdash)^{d}$ is involutive in the sense that $((D,\dashv, \vdash)^d)^d=(D,\dashv, \vdash)$. In fact, $(D,\dashv, \vdash)^d$ is a $g$-dimonoid if and only if $(D,\dashv, \vdash)$ is a $g$-dimonoid.  As usual, a $g$-dimonoid $(D,\dashv, \vdash)$ is said to be {\em self-dual} if $(D,\dashv, \vdash)^d=(D,\dashv, \vdash)$. 

\begin{remark}Observe that if we put
$x \dashv' y = y \dashv x$  and  $x \vdash' y = y \vdash x$, then, in general, the structure $(D,\dashv', \vdash')$ is not a $g$-dimonoid. Indeed, consider the left-zero and right-zero dimonoid (and hence a $g$-dimonoid) $(D, \dashv, \vdash)$, where the operations are given by $x\dashv y = x$ and $x\vdash y=y$ (see~\cite{Lod}). Taking into account that $x \dashv' y = y \dashv x = y$ and $x \vdash' y = y \vdash x = x$, we conclude that $ (x \dashv' y) \dashv' z = z$ while   $x \dashv' (y \vdash' z) = x \dashv' y = y$  for all $x,y,z\in D$. Hence, the axiom $(D_1)$ fails, and consequently $(D,\dashv', \vdash')$ can not be a $g$-dimonoid.
\end{remark}

\smallskip

A $g$-dimonoid $(D,\dashv, \vdash)$ is called {\em abelian} if $x \dashv y = y \vdash x$ for all $x,y\in D$.

\begin{proposition}\label{gda}
Let $(D,\dashv, \vdash)$ be a $g$-dimonoid. Then the following conditions are equivalent:
\begin{itemize}
\item[1)] $(D,\dashv)$ and $(D,\vdash)$ are dual semigroups;
\item[2)] $(D,\dashv, \vdash)$ is abelian;
\item[3)] $(D,\dashv, \vdash)$ is self-dual.
\end{itemize}
\end{proposition}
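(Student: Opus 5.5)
The plan is to unwind the three conditions into statements about the two operations and compare them pointwise. For 1), I read ``$(D,\dashv)$ and $(D,\vdash)$ are dual semigroups'' in the literal sense the paper uses for duals: $(D,\dashv)^d=(D,\vdash)$ as semigroups on the same set, i.e. $x\dashv^{d} y = x\vdash y$, where $\dashv^{d}$ denotes the semigroup dual $x\dashv^{d}y=y\dashv x$. Written out, this says $y\dashv x = x\vdash y$ for all $x,y\in D$. After renaming the variables, this is exactly the defining identity of an abelian $g$-dimonoid, namely $x\dashv y=y\vdash x$ for all $x,y$. So 1)$\Leftrightarrow$2) is immediate from the definitions and needs no axioms of a $g$-dimonoid at all.

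For 2)$\Leftrightarrow$3), I would use the definition of the dual $g$-dimonoid: $(D,\dashv,\vdash)^d=(D,\dashv^d,\vdash^d)$ with $x\dashv^d y=y\vdash x$ and $x\vdash^d y=y\dashv x$. Self-duality means the equality of structures on the same set, that is, both $\dashv^d=\dashv$ and $\vdash^d=\vdash$.
\begin{itemize}
\item The first equality reads $y\vdash x=x\dashv y$ for all $x,y$, which is precisely abelianness.
\item The second reads $y\dashv x=x\vdash y$ for all $x,y$. Swapping the names of $x$ and $y$ turns this into the same identity again.
\end{itemize}
Hence self-duality is equivalent to the single identity $x\dashv y=y\vdash x$, i.e. to 2).

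There is no substantial obstacle here. The only point needing care is to fix the interpretation of ``dual semigroups'' in 1) as equality $(D,\vdash)=(D,\dashv)^d$ on the same underlying set, rather than mere isomorphism. With that reading, all three conditions collapse to the one identity $x\dashv y=y\vdash x$. I would add a remark that the $g$-dimonoid axioms $(D_1)$, $(D_3)$ are not used, beyond guaranteeing that the dual is again a $g$-dimonoid, so that 3) makes sense.
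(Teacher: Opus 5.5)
Your proof is correct and takes essentially the paper's approach: both unwind the definitions until each condition becomes the identity $x\dashv y = y\vdash x$. The paper arranges this as a cycle $1)\Rightarrow 2)\Rightarrow 3)\Rightarrow 1)$ rather than your two biconditionals, and it implicitly uses the same literal reading of ``dual semigroups'' as $(D,\vdash)=(D,\dashv)^d$ on the same set.
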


\begin{proof}[Proof] $(1)\Rightarrow(2)$ If $(D,\dashv)$ and $(D,\vdash)$ are dual semigroups, then $x\dashv y = y\vdash x $ for all $x,y\in D$, and hence $(D,\dashv, \vdash)$ is an abelian $g$-dimonoid.

$(2)\Rightarrow(3)$ Let $(D,\dashv, \vdash)$ be an abelian $g$-dimonoid. Taking into account that 
$$x \dashv^d y = y \vdash x = x \dashv y\ \ \text{  and  }\ \  x \vdash^d y = y \dashv x = x \vdash y,$$
we conclude that $\dashv^d\ =\ \dashv$ and $\vdash^d\ =\ \vdash$, and hence $(D,\dashv, \vdash)$ is a self-dual $g$-dimonoid.

$(3)\Rightarrow(1)$ If $(D,\dashv, \vdash)$ is a self-dual $g$-dimonoid, then $x \dashv y = x \dashv^d y = y \vdash x$ for all $x,y\in D$, and hence $(D,\dashv)$ and $(D,\vdash)$ are dual semigroups.
\end{proof}

\begin{corollary} The class of nonabelian $g$-dimonoids decomposes into pairs of dual $g$-dimonoids. 
\end{corollary}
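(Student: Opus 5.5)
The plan is to realize duality as an involution $d$ on the class of $g$-dimonoids on a given set $D$, and to read the corollary off Proposition~\ref{gda}. By the remarks preceding Proposition~\ref{gda}, the map $d\colon (D,\dashv,\vdash)\mapsto(D,\dashv,\vdash)^d$ sends $g$-dimonoids to $g$-dimonoids and satisfies $d\circ d=\mathrm{id}$. Any involution on a set partitions it into fixed points and two-element orbits $\{X,X^d\}$ with $X\neq X^d$. The fixed points of $d$ are exactly the self-dual $g$-dimonoids, and by the equivalence $(2)\Leftrightarrow(3)$ of Proposition~\ref{gda} these are precisely the abelian ones.

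The key steps, in order, are as follows. First, I will show that $d$ maps nonabelian $g$-dimonoids to nonabelian ones. If $(D,\dashv,\vdash)^d$ were abelian, then it would be a fixed point of $d$ by Proposition~\ref{gda}. Applying $d$ and using involutivity would then give
$$(D,\dashv,\vdash)=((D,\dashv,\vdash)^d)^d=(D,\dashv,\vdash)^d,$$
so $(D,\dashv,\vdash)$ would itself be self-dual, hence abelian, a contradiction. Second, for a nonabelian $X$ the pair $\{X,X^d\}$ consists of two distinct nonabelian $g$-dimonoids, since $X\neq X^d$ by Proposition~\ref{gda}. Third, two such pairs are either equal or disjoint: if $Y\in\{X,X^d\}$, then $\{Y,Y^d\}=\{X,X^d\}$, again by involutivity.

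The only point needing care is the interpretation of ``decomposes into pairs'' when one counts up to isomorphism rather than literally. In that setting I will also observe that $d$ respects isomorphism: a bijection $f$ that is an isomorphism $X\to Y$ is also an isomorphism $X^d\to Y^d$, because the definitions of $\dashv^d$ and $\vdash^d$ are transported verbatim by $f$. Thus $d$ induces an involution on isomorphism classes. However, a nonabelian $g$-dimonoid may be isomorphic to its dual without being equal to it. So at the level of classes the statement should be read as pairing each nonabelian $g$-dimonoid with its dual, which is the literal statement proved above. I expect this bookkeeping, rather than any computation, to be the main (mild) obstacle.
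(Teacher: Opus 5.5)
Your argument is correct and is the reasoning the paper leaves implicit. The paper gives no separate proof: it reads the corollary off the equivalence of ``abelian'' and ``self-dual'' in Proposition~\ref{gda}, together with the involutivity of $d$, so nonabelian $g$-dimonoids fall into two-element orbits $\{X,X^d\}$ with $X\neq X^d$. Your caveat that the pairing holds literally but not up to isomorphism is well taken, since Theorem~\ref{iso-dual_O} exhibits nonabelian $g$-dimonoids that are isomorphic to their duals.
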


The definition of the halo of a $g$-dimonoid  and Proposition~\ref{gda} imply the following corollary.

\begin{corollary}\label{ghaloabdim} Let $(D,\dashv, \vdash)$ be an abelian $g$-dimonoid. If the halo of $(D,\dashv, \vdash)$ is nonempty, then it coincides with the subsemigroup of all right identities of a semigroup $(D,\dashv)$, and the subsemigroup of all left identities of a semigroup $(D,\vdash)$.
\end{corollary}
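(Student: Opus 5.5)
By definition, $e\in\Halo(D,\dashv,\vdash)$ means that $e\vdash d=d=d\dashv e$ for all $d\in D$. So the plan is to rewrite this condition using the abelian identity $x\dashv y=y\vdash x$, which by Proposition~\ref{gda} says exactly that $(D,\dashv)$ and $(D,\vdash)$ are dual semigroups.

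Let $R$ be the set of right identities of $(D,\dashv)$ and $L$ the set of left identities of $(D,\vdash)$. First, $e\vdash d=d\dashv e$ for every $d$ by the abelian identity. Hence the two conditions in the definition of a bar-unit collapse to one, and each of them alone characterizes bar-units:
\[
e\in\Halo(D,\dashv,\vdash)\iff d\dashv e=d\ \ \forall d\in D\iff e\vdash d=d\ \ \forall d\in D .
\]
This gives $\Halo(D,\dashv,\vdash)=R=L$ as sets. The same argument shows $R=L$ even when the halo is empty, so the hypothesis that the halo is nonempty only ensures we are talking about a nonempty set.

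It remains to justify the word ``subsemigroup''. If $e,f\in R$, then $d\dashv(e\dashv f)=(d\dashv e)\dashv f=d\dashv f=d$, so $e\dashv f\in R$ and $R$ is closed under $\dashv$. Likewise $L$ is closed under $\vdash$. Alternatively, since the halo is nonempty it is a $g$-subdimonoid of $(D,\dashv,\vdash)$ (stated earlier), and so it is closed under both operations.

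There is no real obstacle here; the only point needing care is to note that closure is inherited, so the halo really is a subsemigroup of $(D,\dashv)$ and of $(D,\vdash)$ and not merely a subset.
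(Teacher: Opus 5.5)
Your proposal is correct and follows the paper's own one-line justification: the definition of a bar-unit combined with the abelian identity $d\dashv e=e\vdash d$ (Proposition~\ref{gda}) shows that bar-units are exactly the right identities of $(D,\dashv)$ and exactly the left identities of $(D,\vdash)$. Your explicit closure check for the word ``subsemigroup'' fills in a detail the paper leaves implicit.
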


\smallskip

A $g$-dimonoid $(D,\dashv,\vdash)$ is called {\em commutative}~\cite{ZhYulia2} if both semigroups $(D,\dashv)$ and $(D,\vdash)$ are commutative.

Since commutative semigroups $(D,\dashv)$ and $(D,\vdash)$ are dual if and only if their operations coincide, Proposition~\ref{gda} implies the following corollaries.

\begin{corollary}\label{com_na_gdm} Commutative nontrivial $g$-dimonoids are nonabelian.
\end{corollary}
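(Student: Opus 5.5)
The plan is to argue by contraposition, combining Proposition~\ref{gda} with the definition of a commutative $g$-dimonoid. Here a $g$-dimonoid $(D,\dashv,\vdash)$ is \emph{nontrivial} when its two operations do not coincide, that is, when $x\dashv y\neq x\vdash y$ for some $x,y\in D$. We must show that a commutative nontrivial $g$-dimonoid cannot be abelian. So suppose $(D,\dashv,\vdash)$ is commutative and abelian, and aim to show that $\dashv$ and $\vdash$ coincide.

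Since $(D,\dashv,\vdash)$ is abelian, Proposition~\ref{gda} (the implication $(2)\Rightarrow(1)$, or simply the definition) gives $x\dashv y = y\vdash x$ for all $x,y\in D$. Since $(D,\vdash)$ is commutative, $y\vdash x = x\vdash y$. Chaining the two equalities yields
\[
x\dashv y = x\vdash y \qquad \text{for all } x,y\in D,
\]
so the operations coincide and $(D,\dashv,\vdash)$ is the trivial $g$-dimonoid $(D,\dashv,\dashv)$. This contradicts nontriviality, so commutative nontrivial $g$-dimonoids are nonabelian.

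Only the commutativity of $(D,\vdash)$ is used; commutativity of $(D,\dashv)$ would give the same conclusion symmetrically. There is no real obstacle. The only point needing care is to fix the meaning of ``nontrivial'' as ``the operations differ'', consistent with the earlier definition of the trivial $g$-dimonoid $(D,\dashv,\dashv)$. With that convention, the sentence preceding the corollary (commutative semigroups are dual iff their operations coincide) is exactly the step carried out above.
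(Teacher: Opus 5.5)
Your proof is correct and is essentially the paper's argument. The paper combines Proposition~\ref{gda} (abelian iff $(D,\dashv)$ and $(D,\vdash)$ are dual) with the observation that commutative semigroups are dual exactly when their operations coincide, which is precisely your chain $x\dashv y = y\vdash x = x\vdash y$; your remark that commutativity of just one of the two operations already suffices is a correct minor sharpening.
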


\begin{corollary}\label{ab_ncom_gdm} Abelian nontrivial $g$-dimonoids are noncommutative.
\end{corollary}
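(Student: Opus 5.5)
The statement to prove is Corollary~\ref{ab_ncom_gdm}: an abelian nontrivial $g$-dimonoid is noncommutative. The plan is to argue by contraposition and use only the definitions together with the characterization of abelian $g$-dimonoids in Proposition~\ref{gda}.

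Suppose $(D,\dashv,\vdash)$ is abelian and, contrary to the claim, commutative. Since it is abelian, $x\dashv y=y\vdash x$ for all $x,y\in D$. Since $(D,\vdash)$ is commutative, $y\vdash x=x\vdash y$. Combining these gives $x\dashv y=x\vdash y$ for all $x,y\in D$. Hence the operations $\dashv$ and $\vdash$ coincide, and $(D,\dashv,\vdash)$ is the trivial $g$-dimonoid $(D,\dashv,\dashv)$, which contradicts nontriviality.

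Equivalently, one can phrase this through Proposition~\ref{gda}. Abelian means that $(D,\dashv)$ and $(D,\vdash)$ are dual semigroups. Commutative semigroups are dual exactly when their operations coincide, which is the observation stated before Corollary~\ref{com_na_gdm}. So the only abelian commutative $g$-dimonoids are the trivial ones.

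A single commutative semigroup suffices for the contradiction: the argument uses only the commutativity of $(D,\vdash)$. Thus the proof actually shows something slightly stronger: if either of $(D,\dashv)$ or $(D,\vdash)$ is commutative in an abelian $g$-dimonoid, the $g$-dimonoid is trivial.

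There is no real obstacle in this argument. The only point needing care is the meaning of ``noncommutative'': by the definition of a commutative $g$-dimonoid, it means that at least one of the two semigroups fails to be commutative. The argument above delivers exactly this.
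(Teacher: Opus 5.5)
Your argument is correct and is essentially the paper's: the paper derives this corollary from Proposition~\ref{gda} together with the observation that two commutative operations are dual exactly when they coincide, which is precisely your computation $x\dashv y=y\vdash x=x\vdash y$. Your remark that commutativity of only one of the two semigroups already forces triviality is also right, since in an abelian $g$-dimonoid the two semigroups are dual, so one is commutative iff the other is.
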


A $g$-dimonoid $(D,\dashv, \vdash)$ is said to be {\em rectangular} if both $(D,\dashv)$ and $(D,\vdash)$ are rectangular semigroups.

The left-zero and right-zero dimonoid is an example of a noncommuative  abelian  rectan\-gular $g$-dimonoid. Theorem~\ref{iso-dual_O} gives examples of nonabelian  commuative rectangular $g$-dimo\-noids that are not dimonoids.

\smallskip

Observing that a semigroup $(S,*)$ is rectangular if and only if its dual is rectangular, we obtain the following proposition.

\begin{proposition}\label{ds_un_rec}
Let $(D,\dashv, \vdash)$ be a $g$-dimonoid. Then  $(D,\dashv, \vdash)$ is rectangular if and only if $(D,\dashv, \vdash)^d$ is rectangular.
\end{proposition}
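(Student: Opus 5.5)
The plan is to unwind the definition of the dual $g$-dimonoid and reduce the claim to the observation quoted just before the statement: a semigroup $(S,*)$ is rectangular if and only if its dual $(S,*)^d$ is rectangular. By definition, $(D,\dashv,\vdash)^d=(D,\dashv^d,\vdash^d)$ with $x\dashv^d y=y\vdash x$ and $x\vdash^d y=y\dashv x$. Hence, as semigroups,
$$(D,\dashv^d)=(D,\vdash)^d\quad\text{and}\quad(D,\vdash^d)=(D,\dashv)^d.$$

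First I will verify the semigroup observation itself, since it is the only real content. Suppose $(S,*)$ is rectangular and $x,y,z\in S$. Then
$$x*^d y*^d z=z*y*x=z*x=x*^d z.$$
So $(S,*)^d$ is rectangular. The converse follows by applying this to $(S,*)^d$, since $((S,*)^d)^d=(S,*)$.

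Now assume $(D,\dashv,\vdash)$ is rectangular, so both $(D,\dashv)$ and $(D,\vdash)$ are rectangular. Their duals $(D,\vdash^d)$ and $(D,\dashv^d)$ are then rectangular, which means $(D,\dashv,\vdash)^d$ is rectangular. Note that the roles of the two operations swap under duality, but this is harmless because rectangularity of a $g$-dimonoid is symmetric in its two operations. For the converse, apply the same argument to $(D,\dashv,\vdash)^d$ and use the involutivity $((D,\dashv,\vdash)^d)^d=(D,\dashv,\vdash)$.

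There is no genuine obstacle here. The only point needing care is keeping track of the swap $\dashv\leftrightarrow\vdash$ under duality when matching each semigroup to its dual.
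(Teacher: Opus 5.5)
Your proposal is correct and follows the same approach as the paper: the paper derives this proposition from the fact that a semigroup is rectangular if and only if its dual is. You make explicit the identifications $(D,\dashv^d)=(D,\vdash)^d$ and $(D,\vdash^d)=(D,\dashv)^d$, and you verify that semigroup fact directly.
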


\smallskip

For a $g$-dimonoid $(D,\dashv,\vdash)$, if $\mathbb S$ and $\mathbb T$ denote the semigroups $(D,\dashv)$ and $(D,\vdash)$, respectively, then $\mathbb S \rbag \mathbb T$ stands for the $g$-dimonoid $(D,\dashv,\vdash)$.

\section{Isomorphisms of $g$-dimonoids}

A map $\varphi : D_1 \to D_2$ is called a {\em homomorphism } from a $g$-dimonoid $(D_1,\dashv_1, \vdash_1)$ to a $g$-dimonoid $(D_2,\dashv_2, \vdash_2)$~\cite{ZhYurii_gdm2016} if $$\varphi(a\dashv_1 b)=\varphi(a)\dashv_2\varphi(b)\ \ \text{  and  }\ \ \varphi(a\vdash_1 b)=\varphi(a)\vdash_2\varphi(b)$$ for all $a,b\in D_1$.

A bijective homomorphism $\psi : D_1 \to D_2$ is called an {\em isomorphism } from a $g$-dimonoid $(D_1,\dashv_1, \vdash_1)$ to a $g$-dimonoid $(D_2,\dashv_2, \vdash_2)$.
If there exists an isomorphism from a $g$-dimonoid $(D_1,\dashv_1, \vdash_1)$ to a $g$-dimonoid $(D_2,\dashv_2, \vdash_2)$, then $(D_1, \dashv_1, \vdash_1)$ and $(D_2, \dashv_2, \vdash_2)$ are said to be {\em isomorphic}, denoted $(D_1,\dashv_1, \vdash_1)\cong (D_2,\dashv_2, \vdash_2)$. An isomorphism $\psi: D\to D$ is called an {\em   automorphism} of a $g$-dimonoid $(D,\dashv, \vdash)$. By $\Aut(D,\dashv, \vdash)$ we denote the automorphism group of a $g$-dimonoid $(D,\dashv, \vdash)$. It follows that $\Aut((D,\dashv, \vdash)^{+0}) \cong \Aut(D,\dashv, \vdash)$.

\begin{proposition}\label{homo_gdm_dual} Let $(D_1,\dashv_1, \vdash_1)$ and  $(D_2,\dashv_2, \vdash_2)$ be $g$-dimonoids. Then for an arbitrary map $\varphi: D_1 \to D_2$ the following conditions are equivalent:
\begin{itemize}
\item[1)] $\varphi$ is a homomorphism from $(D_1,\dashv_1, \vdash_1)$ to $(D_2,\dashv_2, \vdash_2)$;
\item[2)] $\varphi$ is a homomorphism from $(D_1,\dashv_1, \vdash_1)^{d}$ to $(D_2,\dashv_2, \vdash_2)^{d}$.
\end{itemize}
\end{proposition}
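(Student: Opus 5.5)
The plan is to unfold the definition of the dual operations and observe that the homomorphism conditions for the dual structures are exactly the original homomorphism conditions with the roles of the two operations and of the arguments interchanged. Concretely, recall that in $(D_i,\dashv_i,\vdash_i)^d$ we have $x\dashv_i^d y=y\vdash_i x$ and $x\vdash_i^d y=y\dashv_i x$ for $i=1,2$.

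For $(1)\Rightarrow(2)$, assume $\varphi$ preserves $\dashv$ and $\vdash$. For arbitrary $a,b\in D_1$ we compute
$$\varphi(a\dashv_1^d b)=\varphi(b\vdash_1 a)=\varphi(b)\vdash_2\varphi(a)=\varphi(a)\dashv_2^d\varphi(b),$$
and similarly
$$\varphi(a\vdash_1^d b)=\varphi(b\dashv_1 a)=\varphi(b)\dashv_2\varphi(a)=\varphi(a)\vdash_2^d\varphi(b).$$
Thus $\varphi$ is a homomorphism between the dual $g$-dimonoids.

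For $(2)\Rightarrow(1)$, rather than repeating the computation, I would apply the already proved implication to the $g$-dimonoids $(D_1,\dashv_1,\vdash_1)^d$ and $(D_2,\dashv_2,\vdash_2)^d$. These are indeed $g$-dimonoids, as noted in the paper. The argument then uses the involutivity $((D,\dashv,\vdash)^d)^d=(D,\dashv,\vdash)$ stated in Section~2.

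There is no genuine obstacle here. The only point requiring care is keeping track of the argument swap, so that $\dashv^d$ is matched with $\vdash$ and $\vdash^d$ with $\dashv$ consistently on both sides.
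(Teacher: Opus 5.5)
Your proof is correct and matches the paper's: the forward direction is the same unfolding of $x\dashv^d y=y\vdash x$ and $x\vdash^d y=y\dashv x$. For the converse, the paper repeats the computation in the other direction, whereas you apply the forward implication to the dual $g$-dimonoids and use $((D,\dashv,\vdash)^d)^d=(D,\dashv,\vdash)$, which is equally valid and slightly more economical.
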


\begin{proof}[Proof]  

$(1)\Rightarrow(2)$ Let $\varphi$ be a homomorphism from $(D_1,\dashv_1, \vdash_1)$ to $(D_2,\dashv_2, \vdash_2)$.
Observing that
\[
\begin{aligned}
&\varphi(x\dashv_1^d y) = \varphi(y\vdash_1 x) = \varphi(y)\vdash_2 \varphi(x) = \varphi(x) \dashv_2^d \varphi(y) \text{ and} \\
&\varphi(x\vdash_1^d y) = \varphi(y\dashv_1 x) = \varphi(y)\dashv_2 \varphi(x) = \varphi(x) \vdash_2^d \varphi(y),
\end{aligned}
\]
for all $x,y\in D_1$, we deduce that $\varphi$ is a homomorphism from $(D_1,\dashv_1, \vdash_1)^{d}$ to $(D_2,\dashv_2, \vdash_2)^{d}$.

\medskip
$(2)\Rightarrow(1)$ Let $\varphi$ be a homomorphism from $(D_1,\dashv_1, \vdash_1)^{d}$ to $(D_2,\dashv_2, \vdash_2)^{d}$.
Noting that
\[
\begin{aligned}
&\varphi(x\dashv_1 y) = \varphi(y\vdash_1^d x) = \varphi(y)\vdash_2^d \varphi(x) = \varphi(x) \dashv_2 \varphi(y) \text{ and} \\
&\varphi(x\vdash_1 y) = \varphi(y\dashv_1^d x) = \varphi(y)\dashv_2^d \varphi(x) = \varphi(x) \vdash_2 \varphi(y),
\end{aligned}
\]
for all $x,y\in D_1$, we conclude that $\varphi$ is a homomorphism from $(D_1,\dashv_1, \vdash_1)$ to $(D_2,\dashv_2, \vdash_2)$.
\end{proof}

\begin{corollary}\label{iso_gdm_dual} Let $(D_1,\dashv_1, \vdash_1)$ and  $(D_2,\dashv_2, \vdash_2)$ be $g$-dimonoids. Then   $(D_1,\dashv_1, \vdash_1)$ and  $(D_2,\dashv_2, \vdash_2)$ are isomorphic if and only if $(D_1,\dashv_1, \vdash_1)^{d}$ and  $(D_2,\dashv_2, \vdash_2)^{d}$ are isomorphic.
\end{corollary}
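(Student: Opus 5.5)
The corollary follows directly from Proposition~\ref{homo_gdm_dual}, applied to bijective maps. The plan is to prove one implication and get the converse from the involutivity of duality.

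For the forward direction, suppose $\psi\colon D_1\to D_2$ is an isomorphism from $(D_1,\dashv_1,\vdash_1)$ to $(D_2,\dashv_2,\vdash_2)$. By the implication $(1)\Rightarrow(2)$ of Proposition~\ref{homo_gdm_dual}, the same map $\psi$ is a homomorphism from $(D_1,\dashv_1,\vdash_1)^d$ to $(D_2,\dashv_2,\vdash_2)^d$. Dualization leaves the underlying sets unchanged, so $\psi$ is still a bijection $D_1\to D_2$. Hence $\psi$ is an isomorphism of the dual $g$-dimonoids, and $(D_1,\dashv_1,\vdash_1)^d\cong(D_2,\dashv_2,\vdash_2)^d$.

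For the converse there are two routes. One is to use the implication $(2)\Rightarrow(1)$ of Proposition~\ref{homo_gdm_dual} in the same way. The other is to apply the forward direction to the $g$-dimonoids $(D_i,\dashv_i,\vdash_i)^d$ and use the identity $((D,\dashv,\vdash)^d)^d=(D,\dashv,\vdash)$. Either route shows that an isomorphism of the duals is also an isomorphism of the original $g$-dimonoids.

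The only point needing care is that bijectivity is a property of the underlying map and does not depend on which operations the sets carry. Isomorphism therefore transfers verbatim once the homomorphism property does. No real obstacle is expected; the substantive check was already carried out in Proposition~\ref{homo_gdm_dual}.
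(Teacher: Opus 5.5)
Your proof is correct and matches the paper's approach. The paper obtains this corollary directly from Proposition~\ref{homo_gdm_dual}: dualization does not change the underlying sets, so a bijection is an isomorphism of the $g$-dimonoids if and only if it is an isomorphism of their duals.
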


\begin{corollary}\label{aut_gdm_dual} Let $(D,\dashv, \vdash)$ be a $g$-dimonoid. Then 
$  \Aut(D,\dashv, \vdash)^{d}=\Aut(D,\dashv, \vdash)$. 
\end{corollary}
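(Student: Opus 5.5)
The statement $\Aut(D,\dashv,\vdash)^{d}=\Aut(D,\dashv,\vdash)$ is an equality of subsets of the symmetric group $S_D$. Both $(D,\dashv,\vdash)$ and $(D,\dashv,\vdash)^d$ have the same underlying set $D$, so I would prove it by showing that a bijection $\psi\colon D\to D$ is an automorphism of one if and only if it is an automorphism of the other. The whole argument reduces to Proposition~\ref{homo_gdm_dual} applied with $D_1=D_2=D$ and both operation pairs equal to $(\dashv,\vdash)$.

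First, take $\psi\in\Aut(D,\dashv,\vdash)$. Then $\psi$ is a bijective homomorphism from $(D,\dashv,\vdash)$ to itself. By the implication $(1)\Rightarrow(2)$ of Proposition~\ref{homo_gdm_dual}, the same map $\psi$ is a homomorphism from $(D,\dashv,\vdash)^d$ to $(D,\dashv,\vdash)^d$. Bijectivity is a property of the map alone, not of the operations, so $\psi\in\Aut((D,\dashv,\vdash)^d)$.

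Conversely, take $\psi\in\Aut((D,\dashv,\vdash)^d)$. The implication $(2)\Rightarrow(1)$ of Proposition~\ref{homo_gdm_dual} shows that $\psi$ is a homomorphism of $(D,\dashv,\vdash)$, and again it is bijective, so $\psi\in\Aut(D,\dashv,\vdash)$. Alternatively, one can apply the first direction to $(D,\dashv,\vdash)^d$ and use the involutivity $((D,\dashv,\vdash)^d)^d=(D,\dashv,\vdash)$.

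The two inclusions give equality as sets. The group operation in both cases is composition of maps in $S_D$, so the two automorphism groups coincide as groups. There is no real obstacle here. The only point needing care is that we obtain genuine equality rather than mere isomorphism, and this holds because the dual structure lives on the same set $D$ and the proposition concerns the identical map $\psi$.
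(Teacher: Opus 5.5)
Your proof is correct and matches the paper's intended argument: the corollary is stated without proof as an immediate consequence of Proposition~\ref{homo_gdm_dual}, applied with $D_1=D_2=D$ exactly as you do, using that bijectivity does not depend on the operations. Your remark that this yields genuine equality of subgroups of $S_D$, not just an isomorphism, because the dual lives on the same set $D$, is the right point to make explicit.
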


\medskip

\begin{proposition}\label{gduality} Let $(D,\dashv, \vdash)$ be a $g$-dimonoid. Then $\Halo(D,\dashv, \vdash)^d = \Halo(D,\dashv, \vdash)$. More\-over, $(D,\dashv, \vdash)$ is commutative if and only if $(D,\dashv, \vdash)^d$ is commutative.
\end{proposition}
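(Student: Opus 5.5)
The proof is a direct unwinding of the definitions $x\dashv^d y=y\vdash x$ and $x\vdash^d y=y\dashv x$. No earlier results are needed beyond the involutivity $((D,\dashv,\vdash)^d)^d=(D,\dashv,\vdash)$ noted before Proposition~\ref{gda}.

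\emph{Halo.} First I would show $\Halo(D,\dashv,\vdash)\subseteq\Halo(D,\dashv,\vdash)^d$. Let $e$ be a bar-unit of $(D,\dashv,\vdash)$, so $e\vdash d=d=d\dashv e$ for all $d\in D$. For the dual structure I must check $e\vdash^d d=d=d\dashv^d e$. By definition,
\[
e\vdash^d d=d\dashv e=d \quad\text{and}\quad d\dashv^d e=e\vdash d=d .
\]
So $e$ is a bar-unit of the dual, and the two defining equalities of a bar-unit are simply interchanged under duality. For the reverse inclusion I apply the same argument to $(D,\dashv,\vdash)^d$ and use involutivity. This also covers the case of an empty halo: both halos are empty simultaneously, since each inclusion holds in general.

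\emph{Commutativity.} Suppose $(D,\dashv)$ and $(D,\vdash)$ are commutative. Then
\[
x\dashv^d y=y\vdash x=x\vdash y=y\dashv^d x ,
\]
so $\dashv^d$ is commutative, and likewise $x\vdash^d y=y\dashv x=x\dashv y=y\vdash^d x$. Thus the dual is commutative. The converse again follows by applying this to $(D,\dashv,\vdash)^d$ and using $((D,\dashv,\vdash)^d)^d=(D,\dashv,\vdash)$.

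There is no real obstacle here. The only point needing care is keeping track of the swap of the two operations: the left-identity condition for $\vdash^d$ becomes the right-identity condition for $\dashv$, and vice versa. It must be matched against the bar-unit definition correctly.
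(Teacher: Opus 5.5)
Your proposal is correct and follows essentially the same route as the paper: you verify directly that a bar-unit $e$ satisfies $e\vdash^d d=d\dashv e=d$ and $d\dashv^d e=e\vdash d=d$, and you check that commutativity of $\vdash$ (resp.\ $\dashv$) transfers to $\dashv^d$ (resp.\ $\vdash^d$). In both parts you then use involutivity of duality for the converse, just as the paper does.
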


\begin{proof}[Proof] 
Let $e$ be a bar-unit of the $g$-dimonoid $(D,\dashv, \vdash)$. Then $e\vdash d = d = d\dashv e$ for all $d\in D$. Since $e\vdash^d d = d\dashv e = d = e\vdash d = d\dashv^d e$ for all $d\in D$, it follows that $e$ is a bar-unit of the $g$-dimonoid $(D,\dashv, \vdash)^d$ as well. Using involutivity of the unary duality operation, we conclude that $\Halo(D,\dashv, \vdash)^d = \Halo(D,\dashv, \vdash)$.

Since commutativity of an operation $\dashv$ is equivalent to commutativity of an operation $\vdash^d$ and commutativity of an operation $\vdash$ is equivalent to commutativity of an operation $\dashv^d$, we conclude that $(D,\dashv, \vdash)$ is commutative if and only if $(D,\dashv, \vdash)^d$ is commutative.
\end{proof}

\begin{proposition}\label{gisoLO} Let $(D_1,\dashv_1, \vdash_1)$ and $(D_2,\dashv_2, \vdash_2)$ be  $g$-dimonoids such that  and $(D_1,\dashv_1)$ and $(D_2,\dashv_2)$ are left zero semigroups. The $g$-dimonoids $(D_1,\dashv_1, \vdash_1)$ and $(D_2,\dashv_2, \vdash_2)$ are isomorphic if and only if the semigroups $(D_1,\vdash_1)$ and $(D_2,\vdash_2)$ are isomorphic.
\end{proposition}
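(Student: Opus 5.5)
The plan is to exploit the fact that a left zero operation is preserved by \emph{every} bijection. Once that is observed, an isomorphism of the $g$-dimonoids is exactly a bijection that is an isomorphism of the $\vdash$-semigroups.

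\emph{Necessity.} Suppose $\psi:(D_1,\dashv_1,\vdash_1)\to(D_2,\dashv_2,\vdash_2)$ is an isomorphism of $g$-dimonoids. By definition $\psi$ is a bijection satisfying $\psi(a\vdash_1 b)=\psi(a)\vdash_2\psi(b)$ for all $a,b\in D_1$. Hence $\psi$ is already an isomorphism from $(D_1,\vdash_1)$ onto $(D_2,\vdash_2)$, and nothing further needs checking.

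\emph{Sufficiency.} Let $\psi:(D_1,\vdash_1)\to(D_2,\vdash_2)$ be a semigroup isomorphism. I will show that the same map $\psi$ is an isomorphism of the $g$-dimonoids. It is bijective and respects $\vdash$ by assumption, so only compatibility with $\dashv$ remains. Since both $(D_1,\dashv_1)$ and $(D_2,\dashv_2)$ are left zero semigroups, for all $a,b\in D_1$ we have
\[
\psi(a\dashv_1 b)=\psi(a)=\psi(a)\dashv_2\psi(b).
\]
Thus $\psi$ is a bijective homomorphism of $g$-dimonoids, that is, an isomorphism.

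There is no genuine obstacle here. The only point to notice is that the $g$-dimonoid axioms $(D_1)$ and $(D_3)$ play no role in the argument, because both structures are given to be $g$-dimonoids from the outset. Consequently we never need to verify that the $\vdash$-isomorphism interacts correctly with those axioms. For the same reason, the dual statement (with right zero semigroups $(D_i,\vdash_i)$ and the $\dashv$-operations compared) would follow verbatim, or alternatively from Corollary~\ref{iso_gdm_dual}.
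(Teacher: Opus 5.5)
Your proof is correct and follows the paper's argument essentially verbatim. Necessity is immediate from the definition. For sufficiency, the given $\vdash$-isomorphism is a bijection, and every bijection respects left zero operations, so it is already an isomorphism of the $g$-dimonoids.
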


\begin{proof}[Proof] It is immediate to observe that if the $g$-dimonoids $(D_1,\dashv_1, \vdash_1)$ and $(D_2,\dashv_2, \vdash_2)$ are isomorphic, then the semigroups $(D_1,\vdash_1)$ and $(D_2,\vdash_2)$ are also isomorphic.
Conversely, let $\psi: D_1 \to D_2$ be an isomorphism from the semigroup $(D_1,\vdash_1)$ to the semigroup $(D_2,\vdash_2)$. Then, necessarily, $|D_1| = |D_2|$. Taking into account that any bijective map is an isomorphism from the left zero semigroup $(D_1,\dashv_1)$ to the left zero semigroup $(D_2,\dashv_2)$, it follows that $\psi$ is also an isomorphism from the left zero semigroup $(D_1,\dashv_1)$ to the left zero semigroup $(D_2,\dashv_2)$. Therefore, $\psi$ is an isomorphism from the $g$-dimonoid $(D_1,\dashv_1, \vdash_1)$ to the $g$-dimonoid $(D_2,\dashv_2, \vdash_2)$.
\end{proof}

\begin{corollary}\label{gaut_LO_dm} Let $(D,\dashv, \vdash)$ be a $g$-dimonoid such that  and $(D,\dashv)$ is a left zero semigroup. Then $\Aut(D,\dashv, \vdash) = \Aut(D,\vdash)$.
\end{corollary}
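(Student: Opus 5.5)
The statement to prove is Corollary~\ref{gaut_LO_dm}: if $(D,\dashv)$ is a left zero semigroup, then $\Aut(D,\dashv,\vdash)=\Aut(D,\vdash)$. I will show the two inclusions, reusing the argument of Proposition~\ref{gisoLO} with $D_1=D_2=D$, $\dashv_1=\dashv_2=\dashv$ and $\vdash_1=\vdash_2=\vdash$. Both groups are subgroups of the symmetric group $S_D$, so equality as sets of bijections is what must be established.

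For the inclusion $\Aut(D,\dashv,\vdash)\subseteq\Aut(D,\vdash)$, I take an automorphism $\psi$ of the $g$-dimonoid. By the definition of a homomorphism of $g$-dimonoids, $\psi$ is a bijection with $\psi(a\vdash b)=\psi(a)\vdash\psi(b)$ for all $a,b\in D$. Hence $\psi$ is an automorphism of the semigroup $(D,\vdash)$.

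For the reverse inclusion, I take $\psi\in\Aut(D,\vdash)$. It remains to check that $\psi$ also preserves $\dashv$. This follows because $(D,\dashv)$ is a left zero semigroup:
$$\psi(a\dashv b)=\psi(a)=\psi(a)\dashv\psi(b)\quad\text{for all } a,b\in D.$$
Thus $\psi$ is a bijective homomorphism for both operations, so $\psi\in\Aut(D,\dashv,\vdash)$.

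I expect no real obstacle here. The only point needing care is that every bijection of $D$ is automatically an automorphism of a left zero semigroup, which is exactly the identity displayed above.
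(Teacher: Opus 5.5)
Your proof is correct and is essentially the paper's argument. The corollary is the case $D_1=D_2=D$ of Proposition~\ref{gisoLO}, whose proof rests on the same two observations you use: a $g$-dimonoid automorphism preserves $\vdash$, and every bijection is an automorphism of a left zero semigroup.
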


Dually, one can prove the following proposition.

\begin{proposition}\label{gisoRO} Let $(D_1,\dashv_1, \vdash_1)$ and $(D_2,\dashv_2, \vdash_2)$ be $g$-dimonoids such that  and $(D_1,\vdash_1)$ and $(D_2,\vdash_2)$ are right zero semigroups.  The $g$-dimonoids $(D_1,\dashv_1, \vdash_1)$ and $(D_2,\dashv_2, \vdash_2)$ are isomorphic if and only if the semigroups $(D_1,\dashv_1)$ and $(D_2,\dashv_2)$ are isomorphic.
\end{proposition}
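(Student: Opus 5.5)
The plan is to mirror the proof of Proposition~\ref{gisoLO}, or alternatively to deduce the statement from it by duality.

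For the direct argument, the forward implication is immediate. Any isomorphism of $g$-dimonoids $\psi\colon(D_1,\dashv_1,\vdash_1)\to(D_2,\dashv_2,\vdash_2)$ preserves $\dashv$, so it is in particular an isomorphism of the semigroups $(D_1,\dashv_1)$ and $(D_2,\dashv_2)$.

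For the converse, take a semigroup isomorphism $\psi\colon(D_1,\dashv_1)\to(D_2,\dashv_2)$. Then $\psi$ is a bijection and $|D_1|=|D_2|$. In the right zero semigroups we have $\psi(x\vdash_1 y)=\psi(y)=\psi(x)\vdash_2\psi(y)$, so every bijection between right zero semigroups is an isomorphism. Hence $\psi$ also preserves $\vdash$, and it is therefore an isomorphism of the $g$-dimonoids.

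Alternatively, one can pass to duals. By definition $x\dashv^d y=y\vdash x$, so $\vdash_i$ being a right zero operation makes $\dashv_i^d$ a left zero operation. Likewise $x\vdash_i^d y=y\dashv_i x$, so $(D_i,\vdash_i^d)=(D_i,\dashv_i)^d$. By Corollary~\ref{iso_gdm_dual}, the two $g$-dimonoids are isomorphic if and only if their duals are. Proposition~\ref{gisoLO} applies to the duals, so this happens if and only if $(D_1,\dashv_1)^d\cong(D_2,\dashv_2)^d$. A map is an isomorphism of two semigroups exactly when it is an isomorphism of their duals, so this is equivalent to $(D_1,\dashv_1)\cong(D_2,\dashv_2)$.

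There is no real obstacle in either route. The only point needing care in the duality route is keeping track of which operation becomes which, namely that $\vdash$ turns into $\dashv^d$ with the arguments reversed.
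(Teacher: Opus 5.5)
Your proposal is correct and matches the paper's approach: the paper proves this only by ``dually'' to Proposition~\ref{gisoLO}, and your direct argument is exactly that dual proof, using the fact that every bijection between right zero semigroups is an isomorphism. Your alternative route via Corollary~\ref{iso_gdm_dual} and Proposition~\ref{gisoLO} is also valid, and you tracked the operations correctly.
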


\begin{proposition}\label{gisoabelian} Let $(D_1,\dashv_1, \vdash_1)$ and $(D_2,\dashv_2, \vdash_2)$ be abelian $g$-dimonoids.  The $g$-dimonoids $(D_1,\dashv_1, \vdash_1)$ and $(D_2,\dashv_2, \vdash_2)$ are isomorphic if and only if the semigroups $(D_1,\dashv_1)$ and  $(D_2,\dashv_2)$ are isomorphic.
\end{proposition}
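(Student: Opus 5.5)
The plan is to argue exactly as in Proposition~\ref{gisoLO}, with abelianness now playing the role that the left zero condition played there. The key observation is that in an abelian $g$-dimonoid the operation $\vdash$ is determined by $\dashv$. Indeed, by Proposition~\ref{gda}, $(D_i,\vdash_i)$ is the dual semigroup $(D_i,\dashv_i)^d$, that is, $x\vdash_i y = y\dashv_i x$ for all $x,y\in D_i$, $i=1,2$. So any map that respects $\dashv$ should automatically respect $\vdash$.

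The forward implication is immediate. An isomorphism of $g$-dimonoids $\psi\colon D_1\to D_2$ is a bijection satisfying $\psi(a\dashv_1 b)=\psi(a)\dashv_2\psi(b)$. Hence it is in particular an isomorphism of the semigroups $(D_1,\dashv_1)$ and $(D_2,\dashv_2)$.

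For the converse, let $\psi\colon D_1\to D_2$ be a semigroup isomorphism from $(D_1,\dashv_1)$ to $(D_2,\dashv_2)$. Using the abelian identities on both sides, for all $x,y\in D_1$ we get
\[
\psi(x\vdash_1 y)=\psi(y\dashv_1 x)=\psi(y)\dashv_2\psi(x)=\psi(x)\vdash_2\psi(y).
\]
Thus $\psi$ is also a homomorphism of $(D_1,\vdash_1)$ into $(D_2,\vdash_2)$. Being bijective, it is therefore an isomorphism of the $g$-dimonoids $(D_1,\dashv_1,\vdash_1)$ and $(D_2,\dashv_2,\vdash_2)$.

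There is no genuine obstacle here. The only point needing care is to apply the abelian identity $x\dashv y=y\vdash x$ in both $g$-dimonoids, once to rewrite $\vdash_1$ in terms of $\dashv_1$ and once to convert $\dashv_2$ back into $\vdash_2$.
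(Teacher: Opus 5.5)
Your proof is correct and follows the paper's argument exactly: the forward direction is immediate, and for the converse you use the same chain $\psi(x\vdash_1 y)=\psi(y\dashv_1 x)=\psi(y)\dashv_2\psi(x)=\psi(x)\vdash_2\psi(y)$, applying the abelian identity in both $g$-dimonoids.
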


\begin{proof}[Proof] It is immediate to show that if the $g$-dimonoids  $(D_1,\dashv_1, \vdash_1)$ and $(D_2,\dashv_2, \vdash_2)$ are isomorphic, then the semigroups $(D_1,\dashv_1)$ and $(D_2,\dashv_2)$ are isomorphic as well.
Conversely, let $\psi: D_1 \to D_2$ be an isomorphism from the semigroup $(D_1,\dashv_1)$ to the semigroup $(D_2,\dashv_2)$. Since $\psi(x\vdash_1 y) = \psi(y\dashv_1 x) = \psi(y)\dashv_2 \psi(x) = \psi(x)\vdash_2 \psi(y)$ for all $x,y\in D_1$, it follows that $\psi: D_1 \to D_2$ is an isomorphism from the $g$-dimonoid $(D_1,\dashv_1, \vdash_1)$ to the $g$-dimonoid $(D_2,\dashv_2, \vdash_2)$.
\end{proof}

\begin{corollary}\label{gaut_ab_dm} Let $(D,\dashv, \vdash)$ be an abelian $g$-dimonoid. Then $\Aut(D,\dashv, \vdash) = \Aut(D,\dashv) = \Aut(D,\vdash)$.
\end{corollary}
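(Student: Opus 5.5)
The corollary follows by specializing Proposition~\ref{gisoabelian} to the case $D_1=D_2=D$ with identical operations, and then using the duality between $(D,\dashv)$ and $(D,\vdash)$ guaranteed by Proposition~\ref{gda}. I will establish the two equalities $\Aut(D,\dashv,\vdash)=\Aut(D,\dashv)$ and $\Aut(D,\dashv)=\Aut(D,\vdash)$ separately, as equalities of subsets of $S_D$.

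For the first equality, take any bijection $\psi:D\to D$. If $\psi\in\Aut(D,\dashv,\vdash)$, then by definition $\psi$ preserves $\dashv$, so $\psi\in\Aut(D,\dashv)$. Conversely, let $\psi\in\Aut(D,\dashv)$. The computation in the proof of Proposition~\ref{gisoabelian} applies verbatim with $D_1=D_2=D$, using the abelian identity $x\vdash y=y\dashv x$:
$$\psi(x\vdash y)=\psi(y\dashv x)=\psi(y)\dashv\psi(x)=\psi(x)\vdash\psi(y).$$
Hence $\psi$ also preserves $\vdash$, and so $\psi\in\Aut(D,\dashv,\vdash)$.

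For the second equality, Proposition~\ref{gda} shows that, since $(D,\dashv,\vdash)$ is abelian, $(D,\vdash)=(D,\dashv)^d$. The paper has already noted in the preliminaries that $\Aut(S,*)^d=\Aut(S,*)$ for any semigroup $(S,*)$. Applying this with $(S,*)=(D,\dashv)$ gives $\Aut(D,\vdash)=\Aut(D,\dashv)$. Alternatively, one can run the symmetric argument directly: any bijection that preserves $\vdash$ also preserves $\dashv$, by the same one-line computation with the roles of the two operations exchanged.

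There is no genuine obstacle in this argument. The only point requiring care is to read the equalities as equalities of subgroups of $S_D$, not merely as isomorphisms of abstract groups, and the argument above gives exactly that.
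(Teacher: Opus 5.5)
Your proposal is correct and follows essentially the same route as the paper, which states the corollary without a separate proof as a consequence of Proposition~\ref{gisoabelian}. Specializing that proposition's argument to $D_1=D_2=D$ gives $\Aut(D,\dashv,\vdash)=\Aut(D,\dashv)$, and the duality $(D,\vdash)=(D,\dashv)^d$ together with the observation $\Aut(S,*)^d=\Aut(S,*)$ gives the remaining equality $\Aut(D,\dashv)=\Aut(D,\vdash)$, exactly as you argue.
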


\section{Commutative rectangular $g$-dimonoids}\label{sec:com_rec_g-dm}

In this section, we describe, up to isomorphism, all commutative rectangular $g$-dimonoids and calculate their halos and automorphism groups. 

\medskip

The following proposition is an immediate consequence of the definitions.

\begin{proposition}\label{com_rec_sg} Each commutative rectangular semigroup $S$ is a null semigroup. 
\end{proposition}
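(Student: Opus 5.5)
We show directly that all products in $S$ coincide, so that their common value is a zero. Let $(S,*)$ be commutative and rectangular, so $x*y*z=x*z$ for all $x,y,z\in S$. The strategy is to prove that $x*y$ does not depend on $x$ or $y$. With $c$ the common value of all products, every product equals $c$, and in particular $c*s=s*c=c$ for all $s\in S$, so $c$ is a zero and $S$ is null.

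\textbf{Step 1: $x*y=y*y$ for all $x,y\in S$.} Rectangularity with the middle factor $x$ gives $y*x*y=y*y$. Rewriting the left side with associativity and commutativity,
$$y*x*y=(y*x)*y=(x*y)*y=x*(y*y)=x*y*y.$$
Applying rectangularity to $x*y*y$ with middle factor $y$ gives $x*y*y=x*y$. Hence $x*y=y*y$.

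\textbf{Step 2: all squares are equal.} By commutativity and Step 1 (applied once as stated and once with the roles of $x$ and $y$ swapped),
$$x*x=y*x=x*y=y*y$$
for all $x,y\in S$. Therefore every product $x*y=y*y$ is the same element $c$, and Step 1 together with Step 2 completes the argument.

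The only delicate point is Step 1, where the middle identity has to be moved to an outer position. Commutativity is what allows this: it lets us rewrite $y*x*y$ as $x*y*y$, to which rectangularity applies a second time.
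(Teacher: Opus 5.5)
Your proof is correct: Steps 1 and 2 use only associativity, commutativity and the identity $x*y*z=x*z$, and together they show that all products coincide, so their common value is a zero. The paper gives no argument, calling the statement an immediate consequence of the definitions, and your computation supplies exactly that verification; a one-line variant is $x*y=x*(u*v)*y=u*(x*y)*v=u*v$ for all $x,y,u,v\in S$.
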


We define a $g$-dimonoid $(D, \dashv, \vdash)$ to be  {\em iso-dual} if it is isomorphic to its dual $g$-dimonoid $(D, \dashv, \vdash)^d$. It is evident that every abelian (self-dual)  $g$-dimonoid is iso-dual. Theorem~\ref{iso-dual_O} provides an example of an iso-dual $g$-dimonoid that is not abelian.

\begin{theorem}\label{iso-dual_O}
Let $D$ be a set, and $0\in D$, $a\in D$. An algebraic structure $O_{D^0} \rbag O_{D^a} = (D, \dashv, \vdash)$, where $(D, \dashv)$ and $(D, \vdash)$ are null semigroups with zeros 
$0$ and $a$, respectively, is an iso-dual commutative rectangular $g$-dimonoid with $\Aut(O_{D^0} \rbag O_{D^a})\cong S_{D\setminus\{0,a\}}$. Moreover, if $0\ne a$, the $g$-dimonoid $O_{D^0} \rbag O_{D^a}$ is neither abelian nor a dimonoid, and if $|D|>1$, it has an empty halo. 

\end{theorem}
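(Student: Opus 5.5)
We verify the axioms directly, then compute the dual and the automorphisms, and finally treat the case $0\ne a$ separately. First, $(D,\dashv)$ and $(D,\vdash)$ are null semigroups, hence associative, commutative and rectangular; this gives commutativity and rectangularity at once. For the $g$-dimonoid axioms, note that any product of two elements in $(D,\dashv)$ equals $0$ and any product in $(D,\vdash)$ equals $a$. For $(D_1)$, both $(x\dashv y)\dashv z$ and $x\dashv(y\vdash z)$ are outer $\dashv$-products, so both equal $0$. Similarly, for $(D_3)$ both sides are outer $\vdash$-products, so both equal $a$. Hence $O_{D^0}\rbag O_{D^a}$ is a commutative rectangular $g$-dimonoid.

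Next we compute the dual. We have $x\dashv^d y=y\vdash x=a$ and $x\vdash^d y=y\dashv x=0$, so the dual is $O_{D^a}\rbag O_{D^0}$. If $a=0$, the structure is self-dual. Otherwise, the transposition $\tau$ of $D$ swapping $0$ and $a$ and fixing everything else satisfies $\tau(x\dashv y)=\tau(0)=a=\tau(x)\dashv^d\tau(y)$ and, symmetrically, $\tau(x\vdash y)=0=\tau(x)\vdash^d\tau(y)$. So $\tau$ is an isomorphism onto the dual, and the structure is iso-dual.

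For the automorphism group, a bijection $\psi$ of $D$ is an automorphism exactly when $\psi(0)=0$ and $\psi(a)=a$. These conditions are necessary, since $\psi(0)=\psi(x\dashv y)=\psi(x)\dashv\psi(y)=0$, and likewise $\psi(a)=a$. They are also sufficient by the same computation. Restricting $\psi$ to $D\setminus\{0,a\}$ therefore gives an isomorphism $\Aut\cong S_{D\setminus\{0,a\}}$. This covers $a=0$ too, where $D\setminus\{0,a\}=D\setminus\{0\}$.

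Now assume $0\ne a$. The structure is not abelian, since abelian would require $0=x\dashv y=y\vdash x=a$. It is not a dimonoid, since $(D_2)$ fails: $(x\vdash y)\dashv z=0$ while $x\vdash(y\dashv z)=a$. For the halo, suppose $|D|>1$ and that $e$ is a bar-unit. Then every $d$ satisfies $d=d\dashv e=0$, so $D=\{0\}$, a contradiction. Hence the halo is empty; this argument does not even use $0\ne a$. None of these steps is a real obstacle. The only point needing care is checking that $\tau$ respects both operations after they are interchanged in the dual.
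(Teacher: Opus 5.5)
Your proof is correct and follows essentially the same route as the paper: direct verification of $(D_1)$ and $(D_3)$, the transposition of $0$ and $a$ as an isomorphism onto the dual $O_{D^a}\rbag O_{D^0}$, automorphisms fixing the two zeros, and the absence of right identities in $(D,\dashv)$ for the halo. The differences are minor. You derive $\psi(0)=0$ directly from $\psi(x\dashv y)=\psi(x)\dashv\psi(y)$ instead of citing preservation of zeros, and you treat the case $a=0$ explicitly; both are harmless refinements.
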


\begin{proof}[Proof] It follows directly from the definition of the $g$-dimonoid operations that, for all $x,y,z\in D$, 
$$(x \dashv y) \dashv z = 0 = x \dashv (y \vdash z)\ \text{ and }\ (x \dashv y) \vdash z = a =x \vdash (y \vdash z).$$ 
Therefore,  $(D, \dashv, \vdash)$ is a $g$-dimonoid. 

On the other hand, if $0\ne a$, then 
$$0 \dashv a=0\ne a = a\vdash 0\ \text{ and }\ (x \vdash y) \dashv z = 0\ne a= x \vdash (y \dashv z),$$ and hence the $g$-dimonoid $(D, \dashv, \vdash)$ is neither abelian nor a dimonoid.

\smallskip

Since both $(D, \dashv)$  and $(D, \vdash)$  are commutative (rectangular) semigroups, $(D, \dashv, \vdash)$ is a commutative (rectangular) $g$-dimonoid as well. 

\smallskip

Consider its dual $g$-dimonoid $O_{D^a} \rbag O_{D^0} = (D, \dashv^d, \vdash^d)$, where $(D, \dashv^d)$ is a null semigroup with zero $a$ and $(D, \vdash^d)$ is a null semigroup with zero $0$. Then the bijective map  $\psi:D\to D$ defined by $\psi(a)= 0$, $\psi(0)= a$, and $\psi(x)=x$ for all $D\setminus\{a,0\}$, is isomorphism from $(D, \dashv, \vdash)$ to $(D, \dashv^d, \vdash^d)$. Indeed, for all $x,y\in D$,
$$\psi(x \dashv y) = \psi(0) = a = \psi(x) \dashv^d \psi(y)\ \text{ and }\ \psi(x \vdash y)= \psi(a) = 0 = \psi(x) \vdash^d \psi(y).$$

\smallskip
 
Since for $|D|>1$ the  semigroup $(D, \dashv)$ contains no right identities, the halo of the $g$-dimonoid $O_{D^0} \rbag O_{D^a}$ is empty.

\smallskip

Let $\psi$ be an arbitrary automorphism of the $g$-dimonoid $(D,\dashv, \vdash)$. Then $\psi$ is an automor\-phism of the semigroup $(D, \dashv)$ and $\psi$ is an automorphism of the semigroup $(D, \vdash)$. Since automorphisms preserves zeros and $0$ is a zero $(D, \dashv)$, $a$ is a zero $(D, \vdash)$, it follows that $\psi(0)=0$ and $\psi(a)=a$.

On the other hand, let $f$ be any bijection of $D$ such that  $f(0) = 0$ and $f(a)= a$.  Then for all $x,y\in D$,
$$f(x \dashv y)= f(0) = 0 = f(x) \dashv f(y)\ \text{ and }\ f(x \vdash y)= f(a) = a = f(x) \vdash f(y).$$
It follows that any bijection of $D$ that preserves $0$ and $a$ generates an automorphism of the $g$-dimonoid $O_{D^0} \rbag O_{D^a}$. Therefore, $\Aut(O_{D^0} \rbag O_{D^a})\cong S_{D\setminus\{0,a\}}$.
\end{proof}

\begin{proposition}\label{iso_com_rec} Let $D$ be a set, and let $0\in D$. If $\{a,b\}\subset D\setminus\{0\}$, then the $g$-dimonoids  $O_{D^0} \rbag O_{D^a}$ and $O_{D^0} \rbag O_{D^b}$ are isomorphic while the $g$-dimonoids  $O_{D^0} \rbag O_{D^a}$ and $O_{D^0} \rbag O_{D^0} = O_{D^0}$ are not isomorphic, where for $p\in\{0,a,b\}$ the semigroup $O_{D^p}$ is a null semigroup with zero $p$.
\end{proposition}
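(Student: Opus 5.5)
The plan is to mirror the proof of Theorem~\ref{iso-dual_O}: write down an explicit bijection for the isomorphism part, and use the position of zeros as an invariant for the non-isomorphism part.

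\emph{Isomorphism.} Since $a,b\in D\setminus\{0\}$, take $\psi:D\to D$ to be the transposition exchanging $a$ and $b$ and fixing every other element; if $a=b$, take $\psi$ to be the identity. In either case $\psi(0)=0$ and $\psi(a)=b$. For all $x,y\in D$ we get
$$\psi(x\dashv y)=\psi(0)=0=\psi(x)\dashv\psi(y),$$
computed in $O_{D^0}$, and
$$\psi(x\vdash y)=\psi(a)=b=\psi(x)\vdash\psi(y),$$
computed in $O_{D^b}$. Hence $\psi$ is a bijective homomorphism, so it is an isomorphism $O_{D^0}\rbag O_{D^a}\to O_{D^0}\rbag O_{D^b}$.

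\emph{Non-isomorphism.} Suppose $\varphi:O_{D^0}\rbag O_{D^a}\to O_{D^0}\rbag O_{D^0}$ is an isomorphism. Then $\varphi$ is an isomorphism of $(D,\dashv)$ onto the null semigroup with zero $0$. Since isomorphisms send zeros to zeros, $\varphi(0)=0$. Likewise $\varphi$ is an isomorphism of $(D,\vdash)$, which is null with zero $a$, onto the null semigroup with zero $0$, so $\varphi(a)=0$. Because $a\neq0$, this contradicts injectivity of $\varphi$.

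A more structural way to see the same obstruction is that the zeros of the two operations coincide in $O_{D^0}$ but differ in $O_{D^0}\rbag O_{D^a}$, and the pair of zeros is preserved by isomorphisms.

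Nothing here is a real obstacle. The only points needing care are the degenerate case $a=b$ and the uniqueness of the zero of a semigroup, which is what justifies $\varphi(0)=0$ and $\varphi(a)=0$.
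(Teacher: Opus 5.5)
Your proposal is correct and matches the paper's proof: the paper uses the same transposition of $a$ and $b$ for the isomorphism, and it also proves non-isomorphism from the fact that isomorphisms send the zero of each operation to the corresponding zero. The only cosmetic difference is direction. The paper takes $\phi\colon O_{D^0}\rbag O_{D^0}\to O_{D^0}\rbag O_{D^a}$ and derives both $\phi(0)=0$ and $\phi(0)=a$, whereas you go the other way and get $\varphi(0)=\varphi(a)=0$, contradicting injectivity.
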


\begin{proof}[Proof] One may verify that the dimonoids  $O_{D^0} \rbag O_{D^a}$ and $O_{D^0} \rbag O_{D^b}$ are isomorphic via the bijection $\psi: D \to D$ defined by $\psi(a) = b$, $\psi(b) = a$, and $\psi(x) = x$ for all $x\notin\{a,b\}$. 

To prove that the dimonoids  $O_{D^0} \rbag O_{D^0}$ and $O_{D^0} \rbag O_{D^a}$ are not isomorphic, suppose, to the contrary, that $\phi: O_{D^0} \rbag O_{D^0} \to O_{D^0} \rbag O_{D^a}$ is an isomorphism. Then $\phi$ is an automorphism of the semigroup $O_{D^0}$ and $\phi$ is an isomorphism from the semigroup $O_{D^0}$ to the semigroup $O_{D^a}$. Since automorphisms preserve zeros, and $0$ is zero of $O_{D^0}$,  we obtain $\phi(0) = 0$.  However, as isomorphisms preserve zeros and $0$ and $a$ are the zeros of $O_{D^0}$ and  $O_{D^a}$, respectively, we would have $\phi(0) = a$, leading to a contradiction.
\end{proof}

Taking into account that the  null semigroups $O_{S^0}$ and $O_{T^z}$ are isomorphic if and only if $|S|=|T|$, together with Propositions~\ref{com_rec_sg} and~\ref{iso_com_rec} and Theorem~\ref{iso-dual_O}, we obtain the following characterization theorem.

\begin{theorem}\label{char_iso_com_rec} Let $\kappa>1$ be a cardinal, and let $D$ be a set of cardinality $|D|=\kappa$ with distinct elements $0,z\in D$. Up to isomorphism, there exist exactly two commutative rectangular $g$-dimonoids of order $\kappa$: the trivial one $O_{D^0}$ with $\Aut(O_{D^0})\cong S_{D\setminus\{0\}}$, and the  nonabelian iso-dual nontrivial $g$-dimonoid $O_{D^0} \rbag O_{D^z}$ with $\Aut(O_{D^0} \rbag O_{D^z})\cong S_{D\setminus\{0,z\}}$, which is not a dimonoid.
\end{theorem}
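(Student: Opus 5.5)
The plan is to classify by the structure of each semigroup separately and then glue. Let $(D,\dashv,\vdash)$ be a commutative rectangular $g$-dimonoid with $|D|=\kappa>1$. By definition both $(D,\dashv)$ and $(D,\vdash)$ are commutative rectangular semigroups. Proposition~\ref{com_rec_sg} then says each is a null semigroup, so $(D,\dashv)=O_{D^p}$ and $(D,\vdash)=O_{D^q}$ for some $p,q\in D$.

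Conversely, Theorem~\ref{iso-dual_O} shows that every such pair $O_{D^p}\rbag O_{D^q}$ really is a commutative rectangular $g$-dimonoid. So the class consists exactly of the structures $O_{D^p}\rbag O_{D^q}$ with $p,q\in D$, and we split into two cases.

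\emph{Case $p=q$.} The structure is the trivial $g$-dimonoid $O_{D^p}$. Any bijection $D\to D$ sending $p$ to $0$ is an isomorphism onto $O_{D^0}$, since both products are constant.

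\emph{Case $p\ne q$.} Choose a bijection $D\to D$ with $p\mapsto 0$. Transporting the structure gives $O_{D^0}\rbag O_{D^{a}}$ with $a\ne 0$. Proposition~\ref{iso_com_rec} then yields $O_{D^0}\rbag O_{D^a}\cong O_{D^0}\rbag O_{D^z}$. Here we use only that $a,z\in D\setminus\{0\}$, and if $a=z$ there is nothing to show.

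The two classes are distinct, since Proposition~\ref{iso_com_rec} shows $O_{D^0}\rbag O_{D^z}\not\cong O_{D^0}$. A $g$-dimonoid on a set of a different cardinality cannot be isomorphic to either, so "order $\kappa$" is pinned down by $|D|=\kappa$. This gives exactly two isomorphism classes.

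It remains to record the properties of each class.
\begin{itemize}
\item For $O_{D^0}$: any bijection fixing $0$ is an automorphism, and automorphisms preserve the zero. Hence $\Aut(O_{D^0})\cong S_{D\setminus\{0\}}$.
\item For $O_{D^0}\rbag O_{D^z}$ with $z\ne 0$: Theorem~\ref{iso-dual_O} gives that it is iso-dual, nonabelian and not a dimonoid, with $\Aut\cong S_{D\setminus\{0,z\}}$.
\item Nontriviality of $O_{D^0}\rbag O_{D^z}$: the two operations differ, because $0\dashv 0=0\ne z=0\vdash 0$.
\end{itemize}

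The only step needing any care is the transport of structure in the case $p\ne q$, namely checking that a bijection $\sigma$ carries $O_{D^p}\rbag O_{D^q}$ onto $O_{D^{\sigma(p)}}\rbag O_{D^{\sigma(q)}}$. This is immediate because both products are constant, so the argument is essentially bookkeeping.
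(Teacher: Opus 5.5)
Your proof is correct and follows the same route as the paper. Both operations are null semigroups by Proposition~\ref{com_rec_sg}. The cases $p=q$ and $p\neq q$ reduce to $O_{D^0}$ and $O_{D^0}\rbag O_{D^z}$ by transport of structure and Proposition~\ref{iso_com_rec}; the paper phrases the transport step as the fact that null semigroups of equal cardinality are isomorphic. The remaining properties are read off from Theorem~\ref{iso-dual_O}.

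The one bookkeeping point you gloss over is that an arbitrary $g$-dimonoid of order $\kappa$ lives on some set $D'$, not on $D$ itself. A bijection $D'\to D$ sending $p\mapsto 0$, and also $q\mapsto z$ when $p\neq q$, handles this directly.
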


\section{Some classes of noncommutative rectangular $g$-dimonoids}\label{sec:ncom_rec_g-dm}

This section is devoted to constructing noncommutative rectangular dimonoids.

\begin{proposition}\label{lrec_gdm}
Let  $(D,\dashv)$ be a left zero semigroup and $(D, \vdash)$ be an arbitrary semigroup. For an algebraic structure $(D,\dashv, \vdash)$ the following conditions are equivalent:
\begin{itemize}
\item[1)] $(D,\dashv, \vdash)$ is a $g$-dimonoid;
\item[2)] $(D,\dashv, \vdash)$ is a dimonoid;
\item[3)] $(D, \vdash)$ is a rectangular semigroup.
\end{itemize}
\end{proposition}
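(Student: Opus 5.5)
We prove the equivalences through the cycle $(1)\Rightarrow(3)\Rightarrow(2)\Rightarrow(1)$. Throughout, $(D,\dashv)$ is a left zero semigroup, so $x\dashv y=x$ for all $x,y\in D$. Associativity of both operations is part of the hypotheses in every item, so only the axioms $(D_1)$, $(D_2)$, $(D_3)$ need to be checked.

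The first step is to compute each axiom under this hypothesis. In $(D_1)$ both sides reduce to $x$: the left side is $(x\dashv y)\dashv z=x$ and the right side is $x\dashv(y\vdash z)=x$. So $(D_1)$ holds automatically. In $(D_2)$ the left side is $(x\vdash y)\dashv z=x\vdash y$ and the right side is $x\vdash(y\dashv z)=x\vdash y$. So $(D_2)$ also holds automatically. In $(D_3)$ the left side is $(x\dashv y)\vdash z=x\vdash z$ and the right side is $x\vdash(y\vdash z)$. Since $\vdash$ is associative, the right side equals $x\vdash y\vdash z$. Hence $(D_3)$ is equivalent to $x\vdash y\vdash z=x\vdash z$ for all $x,y,z\in D$, which is exactly the statement that every element is a middle identity of $(D,\vdash)$, i.e.\ that $(D,\vdash)$ is rectangular.

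With this computation the implications are immediate. For $(1)\Rightarrow(3)$: a $g$-dimonoid satisfies $(D_3)$, so $(D,\vdash)$ is rectangular. For $(3)\Rightarrow(2)$: if $(D,\vdash)$ is rectangular, then $(D_3)$ holds, and $(D_1)$, $(D_2)$ hold automatically, so $(D,\dashv,\vdash)$ is a dimonoid. For $(2)\Rightarrow(1)$: every dimonoid satisfies $(D_1)$ and $(D_3)$, hence is a $g$-dimonoid.

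There is no genuine obstacle here. The only point needing care is the reduction of $(D_3)$ to the rectangularity identity, which is where associativity of $\vdash$ is used.
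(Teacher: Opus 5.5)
Your proof is correct and essentially the paper's: it uses the same computations, namely that $(D_1)$ and $(D_2)$ hold automatically when $(D,\dashv)$ is left zero and that $(D_3)$ reduces to $x\vdash y\vdash z=x\vdash z$. You run the cycle as $(1)\Rightarrow(3)\Rightarrow(2)\Rightarrow(1)$ rather than the paper's $(1)\Rightarrow(2)\Rightarrow(3)\Rightarrow(1)$, which is immaterial.
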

\begin{proof}[Proof] $(1)\Rightarrow(2)$ Since $(D,\dashv)$ is a left zero semigroup,  axiom $(D_2)$  holds: $$(x \vdash y) \dashv z = x \vdash y = x \vdash (y \dashv z),$$
and therefore $(D,\dashv, \vdash)$ is a dimonoid.
 
$(2)\Rightarrow(3)$ Given that $(D,\dashv)$ is a left zero semigroup, axiom $(D_3)$  yields
$$x \vdash (y \vdash z) = (x \dashv y) \vdash z =  x  \vdash z,$$
for all $x,y,z\in D$. Consequently, $(D, \vdash)$ is a rectangular semigroup.
  
$(3)\Rightarrow(1)$ Since $(D,\dashv)$ is a left zero semigroup, $$x \dashv (y \dashv z) = x = x \dashv (y \vdash z)$$ for all $x,y,z\in D$, so axiom $(D_1)$  holds.

Moreover, taking into account that  $(D, \vdash)$ is a rectangular semigroup, we conclude that $$(x \dashv y) \vdash z = x  \vdash z = x \vdash (y \vdash z)$$ for any $x,y,z\in D$, and hence  axiom $(D_3)$ is also satisfied. Consequently, $(D,\dashv, \vdash)$ is a $g$-dimonoid.
\end{proof}

Dually, one can prove the following proposition.

\begin{proposition}\label{rrec_gdm}
Let $(D, \dashv)$ be an arbitrary semigroup and $(D,\vdash)$ be a right zero semigroup. For an algebraic structure $(D,\dashv, \vdash)$ the following conditions are equivalent:
\begin{itemize}
\item[1)] $(D,\dashv, \vdash)$ is a $g$-dimonoid;
\item[2)] $(D,\dashv, \vdash)$ is a dimonoid;
\item[3)] $(D, \dashv)$ is a rectangular semigroup.
\end{itemize}
\end{proposition}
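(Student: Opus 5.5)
The proof will run the cycle of implications $(1)\Rightarrow(2)\Rightarrow(3)\Rightarrow(1)$, mirroring the proof of Proposition~\ref{lrec_gdm} with the roles of the two operations exchanged. Alternatively, one could apply Proposition~\ref{lrec_gdm} to the dual structure $(D,\dashv,\vdash)^d$, in which $(D,\dashv^d)=(D,\vdash)^d$ is a left zero semigroup. That route would additionally require checking that duality preserves dimonoids, i.e. that $(D_2)$ is self-dual. Since this check is not stated earlier in the paper, I would argue directly and only note the duality remark at the end.

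For $(1)\Rightarrow(2)$ it suffices to verify axiom $(D_2)$, since $(D_1)$ and $(D_3)$ hold by assumption. Because $(D,\vdash)$ is a right zero semigroup, $(x\vdash y)\dashv z = y\dashv z$ and $x\vdash(y\dashv z)=y\dashv z$, so $(D_2)$ holds automatically.

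For $(2)\Rightarrow(3)$, use axiom $(D_1)$ together with the right zero property: $(x\dashv y)\dashv z = x\dashv(y\vdash z) = x\dashv z$. Associativity of $\dashv$ then gives $x\dashv y\dashv z = x\dashv z$ for all $x,y,z$, so $(D,\dashv)$ is rectangular. This is the step where the dual choice of axiom matters: in Proposition~\ref{lrec_gdm} it was $(D_3)$ that did the work, and here it is $(D_1)$.

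For $(3)\Rightarrow(1)$, both $\dashv$ and $\vdash$ are associative by hypothesis, so only $(D_1)$ and $(D_3)$ need checking. For $(D_3)$, the left side is $(x\dashv y)\vdash z = z$ and the right side is $x\vdash(y\vdash z)=z$, since $\vdash$ is right zero. For $(D_1)$, rectangularity gives $(x\dashv y)\dashv z = x\dashv z$, and the right zero property gives $x\dashv(y\vdash z) = x\dashv z$.

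There is no real obstacle here. The only point needing care is pairing each implication with the correct axiom, namely using $(D_1)$ rather than $(D_3)$ to extract rectangularity of $\dashv$.
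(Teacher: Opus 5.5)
Your proof is correct and takes the same approach as the paper. The paper only says ``Dually, one can prove'', and your argument is the mirror image of its proof of Proposition~\ref{lrec_gdm}. Every step checks out, including the correct use of $(D_1)$ in place of $(D_3)$ for $(2)\Rightarrow(3)$.
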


Since the semigroups $LO_{A_a\leftarrow S}$ and $RO_{A_a\leftarrow S}$ are rectangular, Propositions~\ref{lrec_gdm} and~\ref{rrec_gdm} imply that the rectangular $g$-dimonoids $LO_D \rbag LO_{A_a\leftarrow D}$, $LO_D \rbag RO_{A_a\leftarrow D}$, $LO_{A_a\leftarrow D} \rbag RO_D$, and $RO_{A_a\leftarrow D} \rbag RO_D$  are also rectangular dimonoids. These $g$-dimonoids were introduced and studied in~\cite{Gdim1}.

\smallskip

\begin{proposition}\label{lodim}
Let $(D, \dashv)$ be a semigroup  and $(D,\vdash)$ be a null semigroup with zero $0$. An algebraic structure $(D,\dashv, \vdash)$ is a $g$-dimonoid if and only if $x\dashv y\dashv z = x\dashv 0$ for all $x,y,z\in D$.
\end{proposition}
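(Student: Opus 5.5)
We unpack the two $g$-dimonoid axioms $(D_1)$ and $(D_3)$ for the case where $(D,\vdash)$ is a null semigroup with zero $0$, i.e.\ $u\vdash v=0$ for all $u,v\in D$. Associativity of both operations is part of the hypotheses, so the structure $(D,\dashv,\vdash)$ is a $g$-dimonoid exactly when these two identities hold. This allows us to write $x\dashv y\dashv z$ without parentheses.

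First consider $(D_3)$: $(x\dashv y)\vdash z = x\vdash(y\vdash z)$. Both sides are $\vdash$-products, so both equal $0$ because $(D,\vdash)$ is null. Hence $(D_3)$ holds automatically for any semigroup $(D,\dashv)$.

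Next consider $(D_1)$: $(x\dashv y)\dashv z = x\dashv(y\vdash z)$. Since $y\vdash z=0$, the right-hand side is $x\dashv 0$. So $(D_1)$ is equivalent to the identity $x\dashv y\dashv z = x\dashv 0$ for all $x,y,z\in D$.

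Both directions of the equivalence follow at once. If $(D,\dashv,\vdash)$ is a $g$-dimonoid, then $(D_1)$ gives the stated identity. Conversely, if the identity holds, then $(D_1)$ holds, and $(D_3)$ holds automatically. No step is a real obstacle; the only point needing care is that associativity of $\dashv$ is what makes the unparenthesized triple product well defined.
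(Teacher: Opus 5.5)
Your proof is correct and matches the paper's argument. Axiom $(D_3)$ holds automatically because both sides are $\vdash$-products and therefore equal $0$. Axiom $(D_1)$ reduces to the identity $x\dashv y\dashv z = x\dashv 0$ because $y\vdash z=0$.
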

\begin{proof}[Proof]
Since $(D,\vdash)$ is a null semigroup,  the axiom $(D_3)$  holds:
$$(x \dashv y) \vdash z = 0 = x \vdash (y \vdash z).$$
  
Taking into account that  $x \dashv ( y \vdash z) = x \dashv 0$ for any $x,y,z\in D$, we conclude that  axiom $(D_1)$ holds if and only if $x \dashv y \dashv z = x \dashv 0$ for any $x,y,z\in D$.  
\end{proof}

The following theorem provides an example of noncommutative rectangular $g$-dimonoid which is not a dimonoid.

\begin{theorem} Let $A$ be a nonempty proper subset of a set $D$ such that $a\in A$ and $0\in D\setminus A$. Then $LO_{A_a\leftarrow D}\rbag  O_{D^0} = (D,\dashv, \vdash)$, where $(D,\vdash)$ is a null semigroup with zero $0$ and
\begin{center}
$x\dashv y=\begin{cases}
x\quad\text{if } x\in A \\
a\quad\text{if } x \in D\setminus A,
\end{cases}$
\end{center}
is a nonabelian rectangular $g$-dimonoid which is not a dimonoid, and $\Aut(LO_{A_a\leftarrow D}\rbag  O_{D^0}) \cong S_{A\setminus\{a\}}\times S_{D\setminus (A\cup \{0\})}$, $\Halo(LO_{A_a\leftarrow D}\rbag  O_{D^0}) = \emptyset$. Moreover, if $|A|>1$, then  $LO_{A_a\leftarrow D}\rbag  O_D$ is a  noncommutative $g$-dimonoid. 
\end{theorem}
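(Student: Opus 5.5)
We verify the $g$-dimonoid axioms through Proposition~\ref{lodim}. Since $(D,\vdash)=O_{D^0}$ is a null semigroup with zero $0$, it suffices to check that $x\dashv y\dashv z=x\dashv 0$ for all $x,y,z\in D$. Recall that $LO_{A_a\leftarrow D}$ is rectangular, so $x\dashv y\dashv z=x\dashv z$. Moreover $x\dashv w$ does not depend on $w$: it equals $x$ if $x\in A$ and $a$ otherwise. Hence $x\dashv z=x\dashv 0$, and $(D,\dashv,\vdash)$ is a $g$-dimonoid. Rectangularity holds because both $LO_{A_a\leftarrow D}$ and null semigroups are rectangular.

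For the negative properties we exhibit explicit witnesses.

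\emph{Not abelian:} $a\dashv 0=a$, whereas $0\vdash a=0$, and $a\neq 0$ since $a\in A\not\ni 0$.

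\emph{Not a dimonoid:} axiom $(D_2)$ fails, because $(x\vdash y)\dashv z=0\dashv z=a$ while $x\vdash(y\dashv z)=0$.

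\emph{Empty halo:} a bar-unit $e$ would satisfy $e\vdash d=d$ for all $d$. But $e\vdash d=0$, so this fails for $d=a$.

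\emph{Noncommutative when $|A|>1$:} take $b\in A\setminus\{a\}$. Then $a\dashv b=a\neq b=b\dashv a$. (The statement writes $O_D$; this means $O_{D^0}$.)

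The automorphism group is the main step. Let $\psi$ be an automorphism.
\begin{itemize}
\item[(i)] $\psi$ preserves the zero of $(D,\vdash)$, so $\psi(0)=0$.
\item[(ii)] $\psi$ preserves the set of left zeros of $(D,\dashv)$. This set is exactly $A$: an element $x\notin A$ gives $x\dashv x=a\neq x$, so it is not a left zero. Hence $\psi(A)=A$.
\item[(iii)] $a$ is characterized inside $(D,\dashv)$ as the common value $x\dashv y$ for all $x\notin A$. This set of $x$ is nonempty because $0\notin A$. Applying $\psi$ to $0\dashv 0=a$ gives $\psi(a)=\psi(0)\dashv\psi(0)=0\dashv 0=a$.
\end{itemize}
Consequently $\psi$ fixes $a$ and $0$, permutes $A\setminus\{a\}$, and permutes $D\setminus(A\cup\{0\})$.

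Conversely, let $f$ be any bijection with these properties. Then $f$ preserves $\vdash$, since $f(0)=0$. It also preserves $\dashv$: if $x\in A$, then $f(x)\in A$, so $f(x\dashv y)=f(x)=f(x)\dashv f(y)$. If $x\notin A$, then $f(x)\notin A$, so $f(x\dashv y)=f(a)=a=f(x)\dashv f(y)$. The restriction map is therefore an isomorphism onto $S_{A\setminus\{a\}}\times S_{D\setminus(A\cup\{0\})}$.

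The only delicate point is step (iii): showing that $a$ is fixed, which uses $0\notin A$.
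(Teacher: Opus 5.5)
Your proposal is correct and follows the paper's proof essentially step by step. You use Proposition~\ref{lodim} for the axioms and explicit witnesses for nonabelianness and the failure of $(D_2)$; for automorphisms you use the same three facts ($\psi$ fixes the zero $0$ of $(D,\vdash)$, preserves the set $A$ of left zeros of $(D,\dashv)$, and fixes $a$ via $a=0\dashv 0$ where the paper uses $0\dashv a$), followed by the same converse check. Your direct halo argument ($e\vdash a=0\neq a$) and your explicit verification that the left zeros are exactly $A$ are, if anything, slightly more careful than the paper's wording.
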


\begin{proof}[Proof]
Using the definition of the semigroup $LO_{A_a\leftarrow D}$, it is immediate to check that $x\dashv y\dashv z = x\dashv 0$ for all $x,y,z\in D$. So, Proposition~\ref{lodim} implies that $LO_{A_a\leftarrow D}\rbag  O_{D^0}$ is a $g$-dimonoid. Taking into account that $O_{D^0}$ and $LO_{A_a\leftarrow D}$ are rectangular semigroups, we conclude that the $g$-dimonoid $LO_{A_a\leftarrow D}\rbag  O_{D^0}$ is rectangular as well. Since $0\vdash 0 = 0\ne a = 0\dashv 0$, it follows  that the $g$-dimonoid $LO_{A_a\leftarrow D}\rbag  O_{D^0}$ is not abelian. Furthermore, because $0 \vdash (0 \dashv 0) = 0\notin A$ and $(0 \vdash 0) \dashv 0 = 0 \dashv 0 = a\in A$, axiom $(D_2)$ fails to hold, and hence $(D,\dashv, \vdash)$ is not a dimonoid.

Let $\psi$ be an arbitrary automorphism of the dimonoid $(D,\dashv, \vdash)$. Then $\psi$ is an automorphism of the semigroup $(D, \dashv)$ and $\psi$ is an automorphism of the semigroup $(D, \vdash)$. Since automorphisms preserve zeros and $0$ is a zero of $(D, \vdash)$, we conclude that $\psi(0) = 0$.
Taking into account that automorphisms preserves left zeros and $A$ is a left zero subsemigroup of $(D, \dashv)$, we conclude that $\psi(A)=A$, and thus $\psi(D\setminus A)=D\setminus A$. Because $0\notin A$, we obtain  $\psi(a)=\psi(0\dashv a) = \psi(0)\dashv \psi(a)= 0\dashv \psi(a) = a$.

On the other hand, let $f$ be any bijection of $D$ such that $f(A)=A$, $f(a)=a$, and $f(0)=0$.  If $x\in A$, then $f(x)\in A$, and hence $f(x\dashv y)= f(x)= f(x)\dashv f(y)$ for all $y\in D$. In the case $x\in D\setminus A$ we have that $f(x)\in D\setminus A$, and thus $f(x\dashv y)= f(a)= a = f(x)\dashv f(y)$ for all $y\in D$. Moreover, $f(x\vdash y)= f(0)= 0 = f(x)\vdash f(y)$ for all $x, y\in D$. It follows that any bijection of $A$ that preserves $a$  and any bijection of $D\setminus A$ that preserves $0$ generate an automorphism of $LO_{A_a\leftarrow D}\rbag  O_{D^0}$. Therefore, $\Aut(LO_{A_a\leftarrow D}\rbag  O_{D^0})\cong S_{A\setminus\{a\}}\times S_{D\setminus (A\cup \{0\})}$.

For $|D|> 1$, the commutative semigroup $O_{D^0}$  has no identity, and therefore  $\Halo(LO_{A_a\leftarrow D}\rbag  O_{D^0}) = \emptyset$.

Since in the case $|A|>1$ the semigroup $LO_{A_a\leftarrow D}$ contains at least two left zeros, and hence it is not commutative, we conclude that the $g$-dimonoid $LO_{A_a\leftarrow D}\rbag  O_{D^0}$ is not commutative as well.   
\end{proof}

Dually, one can prove the following proposition and theorem.

\begin{proposition}\label{null_dm}
Let $(D, \dashv)$ be a null semigroup with zero $0$ and $(D, \vdash)$ be an arbitrary semigroup. An algebraic structure $(D,\dashv, \vdash)$ is a $g$-dimonoid if and only if $x\vdash y\vdash z = 0 \vdash z$ for all $x,y,z\in D$.
\end{proposition}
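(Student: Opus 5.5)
The plan is to verify the two $g$-dimonoid axioms directly, exactly as in the proof of Proposition~\ref{lodim}, with the roles of the operations interchanged. The associativity of $(D,\dashv)$ and $(D,\vdash)$ is given, so only $(D_1)$ and $(D_3)$ need to be checked.

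First, because $(D,\dashv)$ is a null semigroup with zero $0$, every product $u\dashv v$ equals $0$. Hence for all $x,y,z\in D$,
$$(x\dashv y)\dashv z = 0 = x\dashv (y\vdash z),$$
so axiom $(D_1)$ holds automatically, whatever the operation $\vdash$ is.

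Second, for axiom $(D_3)$ the left-hand side simplifies to $(x\dashv y)\vdash z = 0\vdash z$. Since $\vdash$ is associative, the right-hand side is $x\vdash(y\vdash z) = x\vdash y\vdash z$. So $(D_3)$ holds for all $x,y,z$ if and only if $x\vdash y\vdash z = 0\vdash z$ for all $x,y,z\in D$. Together with the automatic validity of $(D_1)$, this gives both directions of the equivalence.

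Alternatively, the statement follows by duality: $(D,\dashv,\vdash)^d$ has $\dashv^d$ given by $x\dashv^d y = y\vdash x$, and $\vdash^d$ a null semigroup with zero $0$. Proposition~\ref{lodim} applied to the dual then yields $x\dashv^d y\dashv^d z = x\dashv^d 0$, which unwinds to $z\vdash y\vdash x = 0\vdash x$. Since $(D,\dashv,\vdash)$ is a $g$-dimonoid exactly when its dual is, this gives the claim. There is no real obstacle here; the only care needed is in reversing the order of the factors when unwinding the dual operations.
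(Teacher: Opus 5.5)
Your proof is correct and takes the same approach as the paper, which disposes of this proposition with ``Dually'' after Proposition~\ref{lodim}. Your direct check of $(D_1)$ and $(D_3)$ mirrors that proof with the operations interchanged, and your alternative argument through the dual $g$-dimonoid, with the factor order correctly reversed, is exactly the intended duality.
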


\begin{theorem} Let $A$ be a nonempty proper subset of a set $D$ such that $a\in A$ and $0\in D\setminus A$. Then $O_{D^0} \rbag RO_{A_a\leftarrow D}   = (D,\dashv, \vdash)$, where $(D,\dashv)$ is a null semigroup with zero $0$ and
\begin{center}
$x\vdash y=\begin{cases}
y\quad\text{if } y\in A \\
a\quad\text{if } y \in D\setminus A,
\end{cases}$
\end{center}
is a nonabelian rectangular $g$-dimonoid which is not a dimonoid, and $\Aut(O_{D^0} \rbag RO_{A_a\leftarrow D})  \cong S_{A\setminus\{a\}}\times S_{D\setminus (A\cup \{0\})}$, $\Halo(O_{D^0} \rbag RO_{A_a\leftarrow D}) = \emptyset$. Moreover, if $|A|>1$, then  $O_{D^0} \rbag RO_{A_a\leftarrow D}$ is a  noncommutative $g$-dimonoid. 
\end{theorem}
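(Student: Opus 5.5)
The plan is to deduce this from the previous theorem on $LO_{A_a\leftarrow D}\rbag O_{D^0}$ by duality, using the machinery of the duality section.

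\textbf{Step 1: identify the dual.} Write $(D,\dashv',\vdash') = LO_{A_a\leftarrow D}\rbag O_{D^0}$ and compute its dual $g$-dimonoid. By definition:
\begin{itemize}
\item $x\dashv'^{\,d} y = y\vdash' x = 0$, so $(D,\dashv'^{\,d})$ is the null semigroup $O_{D^0}$;
\item $x\vdash'^{\,d} y = y\dashv' x$, which equals $y$ if $y\in A$ and $a$ otherwise.
\end{itemize}
The second operation is exactly the operation of $RO_{A_a\leftarrow D}$. Hence
\[
(LO_{A_a\leftarrow D}\rbag O_{D^0})^d = O_{D^0}\rbag RO_{A_a\leftarrow D}.
\]
Since the dual of a $g$-dimonoid is a $g$-dimonoid, the structure in the statement is a $g$-dimonoid.

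\textbf{Step 2: transfer the structural properties.}
\begin{itemize}
\item \emph{Rectangularity} transfers by Proposition~\ref{ds_un_rec}.
\item \emph{Automorphism group.} By Corollary~\ref{aut_gdm_dual}, $\Aut$ equals that of $LO_{A_a\leftarrow D}\rbag O_{D^0}$, which is $S_{A\setminus\{a\}}\times S_{D\setminus(A\cup\{0\})}$.
\item \emph{Halo.} By Proposition~\ref{gduality}, the halo is that of the original, hence empty.
\item \emph{Noncommutativity} for $|A|>1$ also follows from Proposition~\ref{gduality}.
\end{itemize}

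\textbf{Step 3: nonabelianness and the failure of $(D_2)$.} By Proposition~\ref{gda}, abelian means self-dual. Self-duality of our structure would force self-duality of its dual, the original, which is nonabelian; so our structure is nonabelian. For the dimonoid property, I expect the cleanest route is a direct check rather than invoking a duality of axiom $(D_2)$. Take $x=y=z=0$:
\[
(0\vdash 0)\dashv 0 = 0, \qquad 0\vdash(0\dashv 0) = 0\vdash 0 = a .
\]
These differ because $0\notin A$ while $a\in A$, so $(D_2)$ fails and the structure is not a dimonoid.

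\textbf{Main obstacle.} The only real care needed is Step 1: correctly identifying the dual operations and keeping track of which operation becomes which. Everything else is immediate.
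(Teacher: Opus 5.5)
Your proposal is correct and follows the paper's own route. The paper only says the result follows ``dually'' from the theorem on $LO_{A_a\leftarrow D}\rbag O_{D^0}$. You make that precise by verifying that $O_{D^0}\rbag RO_{A_a\leftarrow D}=(LO_{A_a\leftarrow D}\rbag O_{D^0})^d$, and then transferring rectangularity, the automorphism group, the halo, noncommutativity and nonabelianness via Proposition~\ref{ds_un_rec}, Corollary~\ref{aut_gdm_dual} and Propositions~\ref{gduality} and~\ref{gda}. Your direct check that $(D_2)$ fails at $x=y=z=0$ is also correct; alternatively, $(D_2)$ is invariant under duality, so its failure transfers as well.
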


\section{Two-element $g$-dimonoids and their automorphism groups}\label{2eldim}

In this section we describe, up to isomorphism, all two-element $g$-dimonoids and their automorphism groups.

It is well-known that there are exactly five pairwise nonisomorphic semigroups having two elements: the multiplicative cyclic group $C_2=\{-1,1\}$, the linear semilattice $L_2=\{0,1\}$ with $\min$-operation, the null semigroup $O_2=\{0,1\}$ with zero $0$, the left zero semigroup $LO_2$ with operation $xy=x$, and the right zero semigroup $RO_2$ with operation $xy=y$.

\begin{theorem}
Up to isomorphism, there exist $9$ two-element $g$-dimonoids, among which $4$ dimonoids are abelian. Moreover, up to isomorphism, there are $4$ commutative $g$-dimonoids of order $2$: three abelian trivial $g$-dimonoids and one nonabelian nontrivial $g$-dimonoid. Furthermore, there are $7$ rectangular $g$-dimonoids of order $2$, including two commutative iso-dual $g$-dimonoids, two pairs of dual noncommutative nonabelian $g$-dimonoids and a single noncommutative abelian $g$-dimonoid. Additionally, there exist exactly $5$ pairwise nonisomorphic two-element trivial $g$-dimonoids.
\end{theorem}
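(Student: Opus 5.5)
The plan is to classify the pairs of semigroup operations $(\dashv,\vdash)$ on a two-element set $D=\{0,1\}$ that satisfy $(D_1)$ and $(D_3)$, and then identify them up to isomorphism. We use the list $C_2, L_2, O_2, LO_2, RO_2$ of two-element semigroups.

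\emph{Trivial and nontrivial cases.} First, separate the trivial $g$-dimonoids. By Proposition~\ref{grl_id}, if $(D,\dashv)$ has a left identity or $(D,\vdash)$ has a right identity, then $\dashv=\vdash$. This yields exactly the $5$ trivial $g$-dimonoids $\mathbb S\rbag\mathbb S$, one for each two-element semigroup $\mathbb S$; they are pairwise nonisomorphic because their $\dashv$-reducts are. For a nontrivial one, $(D,\dashv)$ must have no left identity. Since $C_2$, $L_2$ and $RO_2$ have left identities, this forces $(D,\dashv)\in\{O_2,LO_2\}$. Dually, $(D,\vdash)$ has no right identity, so $(D,\vdash)\in\{O_2,RO_2\}$. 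All four of these are rectangular semigroups.

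\emph{Nontrivial cases.} Next we run through these possibilities on the labelled set.
\begin{itemize}
\item If $(D,\dashv)=LO_D$, Proposition~\ref{lrec_gdm} shows every rectangular $\vdash$ works and gives a dimonoid. With $\vdash=O_{D^p}$, all choices of $p$ are isomorphic by Proposition~\ref{gisoLO}, giving $LO_2\rbag O_2$. With $\vdash=RO_D$ we get $LO_2\rbag RO_2$, which is abelian.
\item If $(D,\vdash)=RO_D$ and $(D,\dashv)=O_{D^p}$, Propositions~\ref{rrec_gdm} and~\ref{gisoRO} give a single dimonoid $O_2\rbag RO_2$. It is the dual of $LO_2\rbag O_2$.
\item If both operations are null, the zeros must differ for nontriviality. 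Theorems~\ref{iso-dual_O} and~\ref{char_iso_com_rec} then give exactly one $g$-dimonoid $O_{D^0}\rbag O_{D^1}$: commutative, nonabelian, iso-dual, and not a dimonoid.
\end{itemize}
So there are $4$ nontrivial ones and $9$ in total. Pairwise nonisomorphism follows by comparing the isomorphism types of the reducts $(D,\dashv)$ and $(D,\vdash)$.

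\emph{The counted properties.} Finally, read off each property from this list.
\begin{itemize}
\item \emph{Abelian.} By Proposition~\ref{gda}, a trivial $\mathbb S\rbag\mathbb S$ is abelian iff $\mathbb S$ is commutative, i.e.\ for $C_2, L_2, O_2$. Among the nontrivial ones only $LO_2\rbag RO_2$ is abelian, giving $4$ abelian ones. Each is a dimonoid: trivial ones are dimonoids by associativity, and $LO_2\rbag RO_2$ by Proposition~\ref{lrec_gdm}.
\item \emph{Commutative.} These are the trivial $C_2, L_2, O_2$ and the single nontrivial $O_{D^0}\rbag O_{D^1}$, which is nonabelian by Corollary~\ref{com_na_gdm}.
\item \emph{Rectangular.} These are the trivial $O_2, LO_2, RO_2$ together with all $4$ nontrivial ones, giving $7$. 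The commutative iso-dual ones are $O_2$ and $O_{D^0}\rbag O_{D^1}$. The dual pairs are $LO_2/RO_2$ (trivial) and $LO_2\rbag O_2 \,/\, O_2\rbag RO_2$; duality comes from Corollary~\ref{iso_gdm_dual}. The single noncommutative abelian one is $LO_2\rbag RO_2$.
\end{itemize}

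The main obstacle is making the case split exhaustive. This relies on the observation that Proposition~\ref{grl_id} already rules out every semigroup with a one-sided identity of the relevant kind. The remaining cases are then governed entirely by Propositions~\ref{lrec_gdm}, \ref{rrec_gdm} and Theorem~\ref{iso-dual_O}.
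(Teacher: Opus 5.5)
Your proof is correct and follows essentially the same route as the paper. Both run a case analysis on the isomorphism types of $(D,\dashv)$ and $(D,\vdash)$, prune it with Proposition~\ref{grl_id}, and resolve the survivors with Propositions~\ref{lrec_gdm}, \ref{rrec_gdm}, \ref{gisoLO}, \ref{gisoRO} and Theorem~\ref{char_iso_com_rec}. Applying Proposition~\ref{grl_id} to both operations up front, to get the $2\times 2$ grid $\{LO_2,O_2\}\times\{O_2,RO_2\}$, is only a tidier organization of the paper's case split on $(D,\dashv)$. One small point: comparing reduct types alone does not separate $O_2$ from $O_{D^0}\rbag O_{D^1}$, since both are of type $(O_2,O_2)$. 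Your trivial/nontrivial split already handles this, because isomorphisms preserve the equality $\dashv=\vdash$.
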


\begin{proof}[Proof]
In the sequel, we divide our investigation into cases. In the case of a semigroup $(\{a,b\},*)$, we shall find all pairwise nonisomorphic dimonoids $(D,\dashv, \vdash)$ such that $(D,\dashv)$ is isomorphic to $(\{a,b\},*)$.

\smallskip
{\noindent \bf Cases $C_2$ and $L_2$}.  According to Proposition~\ref{grl_id}, if a semigroup $(D,\dashv)$ possesses a left identity or a semigroup $(D,\vdash)$ possesses a right identity, then the operations of a $g$-dimonoid $(D,\dashv, \vdash)$ coincide. Therefore, up to isomorphism, there exist a unique $g$-dimonoid $(D,\dashv, \vdash)$ such that $(D,\dashv)\cong C_2$ or $(D,\vdash)\cong C_2$, and this $g$-dimonoid is the trivial dimonoid $C_2$. Similarly, $L_2$ is a unique $g$-dimonoid in the class of $g$-dimonoids $(D,\dashv, \vdash)$ such that $(D,\dashv)\cong L_2$ or $(D,\vdash)\cong L_2$. The trivial $g$-dimonoids $C_2$ and $L_2$ are commutative and abelian.

\smallskip

Note, that in all the remaining cases, the semigroups $LO_2$, $RO_2$, and $O_2$ are rectangular.

\smallskip

{\noindent \bf Case $LO_2$}. If $(D,\vdash)\cong RO_2$, then by Proposition~\ref{lrec_gdm} we obtain the noncommutative abelian rectangular $g$-dimonoid $LO_2 \rbag RO_2$.
According to Proposition~\ref{gisoabelian}, $LO_2 \rbag RO_2$ is the unique $g$-dimonoid in the class of abelian $g$-dimonoids $(D,\dashv,\vdash)$ satisfying $(D,\dashv)\cong LO_2$ and  $(D,\vdash)\cong RO_2$.
It follows from Corollary~\ref{gaut_ab_dm} that $\Aut(LO_2 \rbag RO_2)=\Aut(LO_2)=S_2\cong C_2$.

\smallskip
In the case $(D,\vdash)\cong O_2$, Proposition~\ref{lrec_gdm} yields the noncommutative nonabelian rectangu\-lar $g$-dimonoid $LO_2 \rbag O_2$.
By Proposition~\ref{gisoLO}, $LO_2 \rbag O_2$ is the unique $g$-dimonoid in the class of dimonoids $(D,\dashv,\vdash)$ such that $(D,\dashv)\cong LO_2$ and $(D,\vdash)\cong O_2$.
Consequently, by Corollary~\ref{gaut_LO_dm}, $\Aut(LO_2 \rbag O_2)=\Aut(O_2)\cong C_1$.

\smallskip
In the remaining case, we obtain the  noncommutative nonabelian rectangular trivial $g$-dimonoid $LO_2$.
By Proposition~\ref{gisoLO}, $LO_2$ is the unique $g$-dimonoid in the class of $g$-dimonoids $(D,\dashv,\vdash)$ satisfying $(D,\dashv)\cong (D,\vdash)\cong LO_2$.

\smallskip
{\noindent \bf Case $RO_2$}. Each element of the semigroup $RO_2$ is a left identity.
According to Proposition~\ref{grl_id}, if a semigroup $(D,\dashv)$ has a left identity, then the two operations of a $g$-dimonoid $(D,\dashv,\vdash)$ coincide.
Hence, by Proposition~\ref{gisoRO}, the trivial noncommutative nonabelian rectangular $g$-dimonoid $RO_2$ is the unique $g$-dimonoid in the class of $g$-dimonoids $(D,\dashv,\vdash)$ with $(D,\dashv)\cong RO_2$.
This dimonoid is dual to $LO_2$.

\smallskip
{\noindent \bf Case $O_2$}.  
If $(D,\vdash)$ is a right zero semigroup, then Proposition~\ref{rrec_gdm} yields  the noncommutative nonabelian rectangular $g$-dimonoid $O_2\rbag RO_2$, which is unique in the class of $g$-dimonoids $(D,\dashv, \vdash)$ such that $(D,\dashv)\cong O_2$ and $(D,\vdash)\cong RO_2$, in accordance with Proposition~\ref{gisoRO}. This $g$-dimonoid is dual to the $g$-dimonoid $LO_2\rbag O_2$, and hence by Corollary~\ref{aut_gdm_dual}, $\Aut(O_2\rbag RO_2) = \Aut(LO_2\rbag O_2) \cong C_1$.

According to Proposition~\ref{grl_id}, if $(D,\vdash)$ contains a right identity, then the operations of a $g$-dimonoid $(D,\dashv, \vdash)$ coincide. Since each element of the semigroup $LO_2$ is a right identity, there does not exist a $g$-dimonoid $(D,\dashv, \vdash)$ such that $(D,\dashv) \cong O_2$ and $(D,\vdash)\cong LO_2$.

In the final case, we obtain by Theorem~\ref{char_iso_com_rec} the abelian commutative  regular trivial $g$-dimonoid $O_2$ and the nonabelian commutative regular iso-dual nontrivial $g$-dimonoid  $O_{D^a} \rbag O_{D^b}$ with $\Aut(O_{D^a} \rbag O_{D^b})\cong C_1$.
\end{proof}

Table~\ref{tab:g-dm2} lists all pairwise nonisomorphic two-element $g$-dimonoids and their corresponding automorphism groups.

\begin{table}[ht]
    \centering
\resizebox{14cm}{!}{
\begin{tabular}{|c|c|c|c|c|c|c|c|c|c|}
        \hline
        $D$ & $C_2$  & $L_2$ & $O_2$  & $LO_2$ & $RO_2$ & $LO_2 \rbag RO_2$ & $LO_2 \rbag O_2$ & $O_2 \rbag RO_2$ & $O_{D^a} \rbag O_{D^b}$ \\
        \hline
        $\Aut(D)$ &  $C_1$ & $C_1$ & $C_1$ & $C_2$ & $C_2$ & $C_2$  & $C_1$ & $C_1$ & $C_1$  \\
        \hline
\end{tabular}
}
\smallskip
\caption{Nonisomorphic $2$-element $g$-dimonoids and their automorphism groups}\label{tab:g-dm2}
\end{table}

\section{Number of $g$-dimonoids of small order}\label{sec:gdm-finite}

The determination of the numbers $\mathrm{s}(n)$ and $\mathrm{cs}(n)$, representing respectively the counts of all pairwise nonisomorphic semigroups of order 
$n$ and all pairwise nonisomorphic commutative semigroups of order $n$, is a difficult combinatorial problem. The functions $\mathrm{s}(n)$ and $\mathrm{cs}(n)$
grow very rapidly as $n$ tends to infinity. The sequences $(\mathrm{s}(n))$ and $(\mathrm{cs}(n))$ are listed in the On-Line Encyclopedia of Integer Sequences as entries A027851 and A001426, respectively. All currently known exact values of these sequences are presented in Tables~\ref{tab:sg} and~\ref{tab:csg}.

\medskip

\begin{table}[H]
    \centering
    \resizebox{14cm}{!}{
        \begin{tabular}{|r|c|c|c|c|c|c|c|c|c|c|c|}
            \hline
            $n$\ \ &  1  & 2 & 3 &   4 & 5  & 6 & 7 & 8 & 9  \\
            \cline{1-10}
            $\mathrm{s}(n)$  & 1 & 5 & 24 & 188 & 1915 & 28634 & 1627672 & 3684030417 & 105978177936292  \\
            \cline{1-10}
         \end{tabular}
    }
    \smallskip
    \caption{Number of nonisomorphic semigroups up to order $9$}\label{tab:sg}
\end{table}

\begin{table}[H]
    \centering
    \resizebox{13.5cm}{!}{
        \begin{tabular}{|r|c|c|c|c|c|c|c|c|c|c|c|}
            \hline
            $n$\ \  & 1  & 2 & 3 &   4 & 5  & 6 & 7 & 8 & 9 & 10 \\
            \cline{1-11}
            $\mathrm{cs}(n)$  & 1 & 3 & 12 & 58 & 325 & 2143 & 17291 & 221805 & 11545843 & 3518930337 \\
            \cline{1-11}
         \end{tabular}
    }
    \smallskip
    \caption{Number of nonisomorphic commutative semigroups up to order $10$}\label{tab:csg}
\end{table}

Denote by $\mathrm{gdm}(n)$, $\mathrm{cgdm}(n)$, $\mathrm{agdm}(n)$, and $\mathrm{rgdm}(n)$ the number of all pairwise nonisomorphic $g$-dimonoids, commutative $g$-dimonoids, abelian $g$-dimonoids, and rectangular $g$-dimonoids of order $n$, respectively. We were able to calculate these cardinalities for small $n$. In Appendix~\ref{appnd}, we explain the method used to generate all pairwise nonisomorphic $g$-dimonoids of order $n$ and provide a listing of the \texttt{Python} code employed for these computations. The results of (computer) calculations are presented in Tables~\ref{tab:gdm}--\ref{tab:rgdm}.

\begin{table}[H]
    \centering
    \resizebox{7cm}{!}{
        \begin{tabular}{|r|c|c|c|c|c|c|}
            \hline
            $n$\ \ &  1  & 2 & 3 &   4 & 5   \\
            \cline{1-6}
            $\mathrm{gdm}(n)$  & 1 & 9 & 78 & 1693 & 162465  \\
            \cline{1-6}
         \end{tabular}
    }
    \smallskip
    \caption{Number of  nonisomorphic $g$-dimonoids up to order 5}\label{tab:gdm}
\end{table}

\begin{table}[H]
    \centering
    \resizebox{8cm}{!}{
        \begin{tabular}{|r|c|c|c|c|c|c|c|}
            \hline
            $n$\ \ &  1  & 2 & 3 &   4 & 5 & 6  \\
            \cline{1-7}
            $\mathrm{cgdm}(n)$  & 1 & 4 & 22 & 189 & 3249 &  256253 \\
            \cline{1-7}
        \end{tabular}
    }
    \smallskip
    \caption{ Number of  nonisomorphic commutative $g$-dimonoids up to order 6}\label{tab:cgdm}
\end{table}

Since a $g$-dimonoid is considered trivial when its two operations coincide, the numbers of all pairwise nonisomorphic nontrivial $g$-dimonoids and  pairwise nonisomorphic  commutative nontrivial $g$-dimonoids of order $n$ are equal to $\mathrm{gdm}(n)-\mathrm{s}(n)$ and  $\mathrm{cgdm}(n)-\mathrm{cs}(n)$, respectively.

\begin{table}[H]
    \centering
    \resizebox{7.5cm}{!}{
        \begin{tabular}{|r|c|c|c|c|c|c|c|}
            \hline
            $n$\ \ &  1  & 2 & 3 &   4 & 5 & 6  \\
            \cline{1-7}
            $\mathrm{agdm}(n)$ &  1 & 4 & 17 & 103 & 791 & 10870 \\
            \cline{1-7}
        \end{tabular}
    }
    \smallskip
    \caption{ Number of  nonisomorphic abelian $g$-dimonoids up to order 6}\label{tab:agdm}
\end{table}

Since a commutative $g$-dimonoid is abelian if and only if it is trivial, the number of all pairwise nonisomorphic trivial abelian $g$-dimonoids of order $n$ is equal to $\mathrm{cs}(n)$, and hence the numbers of all pairwise nonisomorphic nonabelian commutative $g$-dimonoids and pairwise nonisomorphic noncommutative abelian dimonoids of order~$n$ are $\mathrm{cgdm}(n)\!-\!\mathrm{cs}(n)$ and $\mathrm{agdm}(n)\!-\!\mathrm{cs}(n)$, respectively.

\begin{table}[H]
    \centering
    \resizebox{7.5cm}{!}{
        \begin{tabular}{|r|c|c|c|c|c|c|}
            \hline
            $n$\ \ &  1  & 2 & 3 &   4 & 5 & 6  \\
            \cline{1-7}
            $\mathrm{rgdm}(n)$  & 1 & 7 & 27 & 128 & 555 & 2846 \\
            \cline{1-7}
            $\mathrm{rs}(n)$  & 1 & 3 & 5 & 10 & 14 & 27 \\
            \cline{1-7}
        \end{tabular}
    }
    \smallskip
    \caption{ Number of  nonisomorphic rectangular $g$-dimonoids and semigroups up to order 6}\label{tab:rgdm}
\end{table}

Table~\ref{tab:rgdm} also lists the numbers $\mathrm{rs}(n)$ of all pairwise nonisomorphic rectangular semigroups for $n \leq 6$. Hence, the number of pairwise nonisomorphic rectangular nontrivial $g$-dimonoids of order~$n$ is given by $\mathrm{rgdm}(n)\! -\! \mathrm{rs}(n)$.

According to Theorem~\ref{char_iso_com_rec}, the numbers of pairwise nonisomorphic noncommutative rectan\-gular $g$-dimonoids and pairwise nonisomorphic noncommutative rectangular nontrivial $g$-dimonoids of order~$n>1$ are equal to $\mathrm{rgdm}(n)\! - \! 2$ and $\mathrm{rgdm}(n)\! -\! \mathrm{rs}(n)\!-\! 1$, respectively.

\begin{problem}Determine the numbers $\mathrm{gdm}(n)$ for $n\geq 6$ and $\mathrm{cgdm}(n)$, $\mathrm{agdm}(n)$, $\mathrm{rgdm}(n)$ for $n\geq 7$.
\end{problem}

\appendix

\section{Program code for computing nonisomorphic $g$-dimonoids}\label{appnd}

This code processes the Cayley tables of all nonisomorphic semigroups of order $n$ obtained with \texttt{GAP} using the \texttt{Smallsemi} library of semigroups of small order.  The corresponding  tables were relabeled from $\{1,\ldots,n\}$ to $\{0,\ldots,n\!-\!1\}$ for computational convenience and exported to \texttt{csv}-files. The code loads the Cayley tables from \texttt{csv}-files, generates all their permutations, and checks pairwise combinations for compliance with the system of $g$-dimonoid axioms. It then eliminates isomorphic cases, retaining only representatives of isomorphism classes, and produces a complete list of pairwise nonisomorphic $g$-dimonoids of a given order $n$. The results are saved as operation tables for $\dashv$ and $\vdash$ into a single \texttt{csv}-file for further analysis. For $n=6$, we translated this code from \texttt{Python} into \texttt{C++} and performed parallel computations. We present the \texttt{Python} version here, as it is more readable and accessible to the reader.

\begin{lstlisting}
import csv
import glob
import os
from itertools import permutations, product

# ----------------------------
# 1) Load semigroup tables
# ----------------------------
semigroup_tables = []
n = None  # size will be determined automatically

for filename in glob.glob("semigroup_*.csv"):
    with open(filename, newline='') as csvfile:
        reader = csv.reader(csvfile)
        table = []
        for row in reader:
            nums = [int(cell) for cell in row if cell.strip() != ""]
            if n is None:
                n = len(nums)  # determine size from the first row
            table.append(nums)
        semigroup_tables.append(table)

print(f"Loaded {len(semigroup_tables)} semigroup tables (order {n})")

# ----------------------------
# 2) Generate all permutations for each table
# ----------------------------
all_tables = []
for tab in semigroup_tables:
    for p in permutations(range(n)):
        inv = [0]*n
        for idx, val in enumerate(p):
            inv[val] = idx
        newtab = [[p[tab[inv[i]][inv[j]]] for j in range(n)] for i in range(n)]
        all_tables.append(newtab)

print(f"Total associative tables with permutations: {len(all_tables)}")

# ----------------------------
# 3) Check g-dimonoid axioms
# ----------------------------
def is_dimonoid(op_dashv, op_vdash):
    for x, y, z in product(range(n), repeat=3):
        if op_dashv[op_dashv[x][y]][z] != op_dashv[x][op_vdash[y][z]]: return False #D1
        if op_vdash[op_dashv[x][y]][z] != op_vdash[x][op_vdash[y][z]]: return False #D3
        #if op_dashv[x][y] != op_dashv[y][x]: return False #CommL
        #if op_vdash[x][y] != op_vdash[y][x]: return False #CommR
        #if op_vdash[x][y] != op_dashv[y][x]: return False #Abelian
        #if op_dashv[x][op_dashv[y][z]] != op_dashv[x][z] or op_vdash[x][op_vdash[y][z]] != op_vdash[x][z]: return False #Rectangular
    return True

# ----------------------------
# 4) Find all g-dimonoids
# ----------------------------
dimonoid_pairs = []
for t1 in all_tables:
    for t2 in all_tables:
        if is_dimonoid(t1, t2):
            dimonoid_pairs.append((t1, t2))

print(f"Found {len(dimonoid_pairs)} ordered dimonoid pairs")

# ----------------------------
# 5) Canonization under permutations
# ----------------------------
canon_seen = set()
rep_pairs = []  # list of representatives to output
for t1, t2 in dimonoid_pairs:
    keys = []
    for p in permutations(range(n)):
        inv = [0]*n
        for idx, val in enumerate(p):
            inv[val] = idx
        r1 = tuple(tuple(p[t1[inv[i]][inv[j]]] for j in range(n)) for i in range(n))
        r2 = tuple(tuple(p[t2[inv[i]][inv[j]]] for j in range(n)) for i in range(n))
        keys.append((r1, r2))
    min_key = min(keys)
    if min_key not in canon_seen:
        canon_seen.add(min_key)
        rep_pairs.append(min_key)

print(f"Number of nonisomorphic g-dimonoids (order {n}): {len(canon_seen)}")

# ----------------------------
# 6) Save all nonisomorphic g-dimonoids into one csv-file
# ----------------------------
out_dir = os.path.dirname(os.path.abspath(file))  # program directory
list_file = os.path.join(out_dir, f"list_gdm{n}.csv")

with open(list_file, "w", newline='', encoding="utf-8") as f:
    writer = csv.writer(f)
    for idx, (r1, r2) in enumerate(rep_pairs, 1):
    		writer.writerow([f"g-dimonoid #{idx}"])
    		for name, table in [("Operation dashv", r1), ("Operation vdash", r2)]:
        		writer.writerow([name])
        		writer.writerows(table)
        		writer.writerow([])
    		writer.writerow([])
     
print(f"\nAll nonisomorphic g-dimonoids saved to {list_file}")
\end{lstlisting}

\end{document}